\newtheorem{theorem}{Theorem}[section]
\newtheorem{lemma}[theorem]{Lemma}
\newtheorem{corollary}[theorem]{Corollary}
\newtheorem{proposition}[theorem]{Proposition}
\theoremstyle{definition}
\newtheorem{definition}[theorem]{Definition}
\newtheorem{example}[theorem]{Example}
\numberwithin{equation}{section}
\numberwithin{equation}{section}
\begin{document}
%\baselineskip = 22pt
%-----------------------------------------------------------------------------
% Title and Abstract
%-----------------------------------------------------------------------------

\title[$e$-cohomology modules]{Essential cohomology modules}

\author[R. H. Mustafa]{Runak H. Mustafa}
\address{Mathematics Department, Faculty of Science\\
	Soran University\\ 44008, Soran, Erbil
	Kurdistan Region, Iraq}
\email {rhm310h@maths.soran.edu.iq}

\author[I. Akray]{Ismael Akray}
\address{Mathematics Department, Faculty of Science\\
	Soran University\\ 44008, Soran, Erbil
	Kurdistan Region, Iraq}

\email {akray.ismael@gmail.com}
\subjclass [2020]{13C11, 46M18, 13D45}
\keywords{essential exact sequence, homology module, essential injective module, local cohomology module}

\begin{abstract}

In this article, we give a generalization to injective modules by using $e$-exact sequences introduced by Akray in [1] and name it $e$-injective modules and investigate their properties. We reprove both Baer criterion and comparison theorem of homology using $e$-injective modules and $e$-injective resolutions. Furthermore, we apply the notion $e$-injective modules into local cohomology to construct a new form of the cohomology modules call it essential cohomology modules (briefly $e$-cohomology modules). We show that the torsion functor $\Gamma_a ( - )$ is an $e$-exact functor on torsion-free modules. We seek about the relationship of $e$-cohomology within the classical cohomology. Finally, we conclude that they are different on the vanishing of their $i_{th}$ cohomology modules.

\end{abstract}

\maketitle

\section{Introduction}
 Exact sequences play an important roles in module theory and homological algebra. Some notions such as injectivity, projectivity, flatness and derived functors have been defined and analyzed by exact sequence approach. Generalizing exact sequences gives possibilities to generalize some related notions which are defined by exact sequences. An exact sequence $0\rightarrow A\stackrel i\to B\stackrel p\to C\to 0$ is split if there exists a morphism $ j:C \rightarrow B $\:(or $f:B \rightarrow A $) such that $ pj=I_C$ (or $\: fi=I_A)$. In 1972, R. S. Mishra introduced a generalization for split sequence where a semi-sequence $M_{i-1}\stackrel{f_{i-1}}\to M_i\stackrel{f_i}\to M_{i+1}$ is called semi-split if $Ker(f_i)$ is a direct summand of $M_i$ \cite{H}. So a semi-split is split if and only if it is exact. In 1999, Davvaz and parnian-Goramaleky introduced a generalization for exact sequences called it a $U$-exact sequence, where a sequence of $R$-modules and $R$-homomorphisms  $\dots \to M_{i-1}\stackrel{f_{i-1}}\to M_i\stackrel{f_i}\to M_{i+1}\to \dots$ is a $U_{i+1}$-exact at $M_i$ if $Im(f_i)=f_{i+1}^{-1}(U_{i+1})$, where $U_{i+1}$ is a submodule of $M_{i+1}$ \cite{DP}. Also, a short sequence $0\to A\stackrel{f} \to B\stackrel{g}\to C\to 0$ of $R$-modules is a $U$-exact if it is ${0}$-exact at $A$, $U$-exact at $B$ and ${0}$-exact at $C$, equivalently, if $f$ is monic, $g$ is epic and $Imf=g^{-1}(U)$ for a submodule $U$ of $C$. Clearly, a $U$-exact sequence is exact if and only if $U=0$. Furthermore, they defined a $V$-coexact sequence as a dual notion of $U$-exact sequence, where a sequence $0\to A\stackrel{f} \to B\stackrel{g}\to C\to 0$ is $V$- coexact if $f$ is monic, $g$ is epic and $f(V)=Ker(g)$, where $V$ is a submodule of $A$. A submodule $N$ of an $R$-module $M$ is called essential or large in $M$ if it has non-zero intersection with every non-zero submodule of $M$ and denoted by $N\leqslant_e M$. Akray in 2020 \cite{AZ} introduced another generalization to exact sequences of modules and instead of the equality of $Im(f)$ with $Ker(g)$ they took $Im(f)$ as a large (essential) submodule of $Ker(g)$ in a sequence $0\to A\stackrel{f} \to B\stackrel{g}\to C\to 0$ and called it essential exact sequence  or simply $e$-exact sequence. Equivalently, a sequence of $R$-modules and $R$-morphisms $\dots\to N_{i-1} \stackrel{f_{i-1}}\to N_i\stackrel{f_i}\to N_{i+1}\to \dots$ is said to be essential exact (e-exact) at $N_i$, if $Im(f_{i-1})\leqslant_e Ker(f_i)$ and to be e-exact if it is e-exact at $N_i$ for all $i$. In particular, a sequence of $R$-modules and $R$-morphisms $0\to L\stackrel{f_1} \to M\stackrel{f_2}\to N\to 0$ is a short e-exact sequence if and only if  $Ker(f_1)=0, Im(f_1)\leqslant_e Ker(f_2)$ and $Im(f_2)\leqslant_e N$. They studied some basic properties of $e$-exact sequences and established their connection with notions in module theory and homological algebra \cite{ZA}. Also, F. Campanini and A. Facchini were worked on $e$-exact sequences and studied the relation of $e$-exactness with some related functors like the functor defined on the category of $R$-modules to the spectral category of $R$-modules and the localization functor with respect to the singular torsion theory \cite{CF}.

In this paper, we continue to observe further properties of $e$-exact sequences and we will restrict our discussion to their applications on both injective modules and the torsion functor of local cohomology.The local cohomology was introduced by Grothendieck in a seminar in Harvard 1961 and written up by Hartshorne in 1967. Next, this subject was studied by Hartshorne and numerous authors even in the recent years see \cite{MR} and \cite{G}.

In section two, we introduce the notion essential injective module briefly $e$-injective module. We prove the Baer Criterion for e-injectives (Theorem 2.2).
We prove that a product of $R$-modules is an $e$-injective if and only if each of its components is an $e$-injective(Proposition \ref{ee}). Also, we show that a torsion-free $R$-module $E$ is an $e$-injective if and only if $ Hom(\quad, E) $ is a contravariant e-exact functor (Proposition \ref{aa}). Moreover,  we reprove the Comparison Theorem of homology for e-injectives (Theorem \ref{bb}). 

Section three devoted for discussing the application of $e$-exact sequence to local cohomology modules. We construct cohomology modules by using $e$-injective resolutions and called the $i_{th}$-local cohomology of an $R$-module $M$ by $_eH^i_a(M)$. We prove that $_eH^0_a(\:)$ is naturally equivalent to $r\Gamma_a(\:)$ on torsion-free $R$-modules (Theorem \ref{3.10}). We study some cases of vanishinig of $e$-cohomology modules (Theorem \ref{ne} and Theorem \ref{ein}). Furthermore, we show that by an example that not necessary an ${a}$-torsion $R$-module $M$ has zero $e$-cohomology modules $_e H^i_a(M)=0$, for $i>0$ as this is the case in local cohomology (Example \ref{ex}).

\section{Essential injective modules}
\hskip 0.6cm

In this section we introduce essential injective module and investigate some properties and results on such topic. We begin with their definition.
\begin{definition}
	We call an $R$-module $E$ essential injective briefly e-injective if it satisfies the following condition: for any monic $f_1:A_1\rightarrow A_2$
	and any map $f_2:A_1\rightarrow E$ of $R$-modules, there exist $0\ne r \in R$ and $f_3:A_2\rightarrow E$ such that $f_3f_1=rf_2$.
	\[\begin{tikzcd}[sep=2cm]
		{} & E & {} \\
		0 \ar{r} & A_1 \ar{u}{f_2} \ar[r,swap]{r}{f_1} & A_2 \ar[dashed,swap]{lu}{f_3}
	\end{tikzcd}\]
In this case, we say the map $f_3$ is essentially extends to the map $f_2$. 	
\end{definition}

Now, we give one of our main results in this section which is the Baer criterion for $e$-injectives.
\begin{theorem}\label{B}[e-Baer Criterion]
	 An $ R- $module $ E $ is e-injective if and only if for a nonzero ideal $\textbf{a}$ of $R$, every $ R $-map $ f:\textbf{a} \to E $ can be extends essentially to $ g:R\to E $ in the sence that there exists $ 0 \ne r \in R $ with $ gi=rf $, where $i:\textbf{a} \to R$ is an inclusion map as in the following diagram.
	\[\begin{tikzcd}[sep=2cm]
		{} & E & {} \\
		0 \ar{r} & \textbf{a} \ar{u}{f} \ar[r,swap]{r}{i} & R\ar[dashed,swap]{lu}{g}		
	\end{tikzcd}\]		
\end{theorem}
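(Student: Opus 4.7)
The statement has two directions, and my plan is to model the argument on the classical Baer criterion, modifying each step to account for the extra scalar $r$ that is permitted in the essential framework.

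The forward direction is immediate. Assuming $E$ is $e$-injective, I would simply instantiate the defining property of $e$-injectivity with $f_1$ equal to the inclusion $i:\mathbf{a}\hookrightarrow R$ (which is monic) and $f_2$ equal to the given map $f:\mathbf{a}\to E$. The definition produces $0\ne r\in R$ and $g:R\to E$ with $gi=rf$, which is exactly the essential extension required.

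For the converse, suppose every map from a nonzero ideal into $E$ extends essentially. Given a monic $f_1:A_1\to A_2$ and a map $f_2:A_1\to E$, identify $A_1$ with its image in $A_2$. I would apply Zorn's lemma to the poset
\[\mathcal{S}=\{(B,h)\,:\,A_1\subseteq B\subseteq A_2,\; h:B\to E,\; h|_{A_1}=rf_2\text{ for some }0\ne r\in R\},\]
ordered by $(B_1,h_1)\leq(B_2,h_2)$ iff $B_1\subseteq B_2$ and $h_2|_{B_1}=h_1$. The set is nonempty (take $(A_1,f_2)$ with $r=1$) and every chain has an upper bound via union, so Zorn yields a maximal element $(M,g)$ with $g|_{A_1}=r_gf_2$ for some $0\ne r_g$. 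I would then show $M=A_2$ by contradiction: pick $x\in A_2\setminus M$ and consider the ideal $\mathbf{a}=\{a\in R\,:\,ax\in M\}$. If $\mathbf{a}=0$, then $M\cap Rx=0$ and I would extend $g$ by $h(m+rx)=g(m)$, contradicting maximality. If $\mathbf{a}\ne 0$, I apply the e-Baer hypothesis to $\phi:\mathbf{a}\to E$, $\phi(a)=g(ax)$, to obtain $0\ne s\in R$ and $\psi:R\to E$ with $\psi(a)=s\phi(a)=sg(ax)$ for $a\in\mathbf{a}$, and then define $h:M+Rx\to E$ by
\[h(m+rx)=sg(m)+r\psi(1).\]
A short verification (using $\psi(r'-r)=sg((r'-r)x)$ whenever $r'-r\in\mathbf{a}$) shows $h$ is well-defined and $R$-linear, with $h|_{A_1}=(sr_g)f_2$.

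The main obstacle I anticipate is precisely the step of staying inside $\mathcal{S}$ at this extension: we need $sr_g\ne 0$, which is not automatic over a ring with zero divisors. I would handle it by arguing that the scalar attached to $g|_{A_1}$ is not rigid; if $sr_g=0$ then $h|_{A_1}=0$, and either $f_2=0$ (in which case any nonzero $r'$ works, e.g.\ $r'=1$) or $\mathrm{Ann}(\mathrm{Im}\,f_2)\ne 0$ (in which case one takes $r'$ in this annihilator to write $h|_{A_1}=r'f_2$ with $r'\ne 0$). In any case, $(M+Rx,h)\in\mathcal{S}$ strictly dominates $(M,g)$, contradicting maximality. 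This forces $M=A_2$, and taking $f_3=g$ with $r=r_g$ completes the proof that $E$ is $e$-injective.
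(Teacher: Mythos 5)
Your proposal is correct and follows essentially the same route as the paper: a Zorn's-lemma argument on partial essential extensions of $f_2$, adjoining one element $x$ at a time via the ideal $\{a\in R: ax\in M\}$ and the e-Baer hypothesis. You are in fact more careful than the paper in two places it silently skips --- the case $\mathbf{a}=0$ (the stated hypothesis only covers nonzero ideals) and the scalar $s$ arising from the essential extension $\psi$ --- although your claimed dichotomy ``$f_2=0$ or $\mathrm{Ann}(\mathrm{Im}\,f_2)\ne 0$'' is not exhaustive over rings with zero divisors; over a domain (the paper's standing assumption elsewhere) one has $sr_g\ne 0$ automatically and the issue disappears.
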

\begin{proof}
	Since any ideal of $R$ can be considered as a submodule of the $R$-module $R$, the existence of an extension $g$ of $f$ is just a special case of the definition of e-injectivity of $E$.
	Suppose we have the following diagram, where $A$ is a submodule of $B$ and $i$ is the inclusion map: 
	\[\begin{tikzcd}[sep=2cm]
		{} & E & {} \\
		0 \ar{r} & A \ar{u}{f} \ar[r,swap]{r}{i} & B\ar[dashed,swap]{lu}{g}		
	\end{tikzcd}\]	Let $X$ be the set of all ordered pairs $(A',g')$, where $A\subseteq A'\subseteq B$  and $g':A'\to E$ essentially extend $f$: that is, $g'|_A =rf$ for nonzero r in $R$. Partially ordered $X$ by defining $(A',g')\preceq (A'',g'')$, which means $A'\subseteq A''$ and $g''$ essentially extends $g'$, so the chains in $X$ have upper bounds in $X$, hence using Zorn Lemma, there exists a maximal element $(A_0,g_0)$ in $X$. If $A_0=B$, we are done, if it is not, there is $b\in B$ with $b\notin A_0$ . Define $\textbf{a}=\{r\in R:rb\in A_0\} $, it is clear that $\textbf{a}$ is an ideal in $R$. Define $h:\textbf{a} \to E$, by $h(r)=g_0(rb)$. By hypothesis, there is a map $h^*:R\to E$ essentially extends $h$. Finally, define $A_1=A_0+<b>$ and $g_1:A_1\to E$ by $g_1(a_0+rb)=g_0(a_0)+rh^*(1)$, where $a_0\in A_0$ and $r\in R$. Let us show that $g_1$ is well-defined. If $a_0+rb=a_0'+r'b$, then $(r-r')\in \textbf{a}$. Therefore, $g_0((r-r')b)$ and $h(r-r')$ are defined, and we have $g_0(a_0'-a_0)=g_0((r-r')b)=h(r-r')=h^*(r-r')=(r-r')h^*(1)$. Thus $g_0(a_0')-g_0(a_0)=rh^*(1)-r'h^*(1)$ and $g_(a_0')+r'h^*(1)=g_0(a_0)+rh^*(1)$ as desired. Clearly, $g_1(a_0)=g_0(a_0)$ for all $a_0\in A_0$, so that the map $g_1$ essentially extends $g_0$. We conculde that $(A_0,g_0)\prec (A_1,g_1)$ contradicting the maximality of $(A_0,g_0)$. Therefore, $A_0=B$, the map $g_0$ is an essentially extends $f$ and $E$ is an e-injective.
\end{proof}

The following example shows that an $e$-injective module may not be injective.
\begin{example}
	The $\mathbb{Z}$-module $\mathbb{Z}$ is $e$-injective module, but it is not injective. We can show that $\mathbb{Z}$ is an $e$-injective by using $e$-Baer Criterion. Let $f_1:n\mathbb{Z}\to \mathbb{Z}$ be an inclusion map defined as $f_1(x)=sx; s\in \mathbb{Z}$ and $f_2:n\mathbb{Z}\to\mathbb{Z}$ defined as $f_2(x)=mx; m\in \mathbb{Z}$ where $(m,s)=1$. Then by talking $f_3=f_2$, we have $f_3\circ f_1(x)=f_3(f_1(x))=f_3(sx)=msx=sf_2(x)$
	\[\begin{tikzcd}[sep=2cm]
		{} & \mathbb{Z} & {} \\
		0 \ar{r} & n\mathbb{Z} \ar{u}{f_2} \ar[r,swap]{r}{f_1} & \mathbb{Z} \ar[dashed,swap]{lu}{f_3}
	\end{tikzcd}\]
\end{example}
It's esay to see that every submodule of $\mathbb{Z}$-module $\mathbb{Z}$ is $e$-injective. On an integral domain, every injective is divisble but this is not the case for e-injectives, since $\mathbb{Z}$ as $\mathbb{Z}$-module is $e$-injective while not divisble.

\begin{definition} {\cite{AZ}}
	An $e$-exact sequence $0\rightarrow A\stackrel i\to B\stackrel p\to C\to 0$ is $e$-split if there exist $0\ne s \in R$ and a morphism $ j:C \rightarrow B $\:(or $f:B \rightarrow A $) such that $ pj=sI_C$ (or $\: fi=sI_A)$.
\end{definition}
\begin{proposition}\label{dd}
	If an e-exact sequence {$ 0\rightarrow A \stackrel{i}\rightarrow B \stackrel{p}\rightarrow C\rightarrow 0 $} is $e$-split, then there exists $0\ne r\in R$ such that $rB\cong A\bigoplus C$.
	\end{proposition}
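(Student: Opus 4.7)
The plan is to mimic the classical Splitting Lemma, in which a genuinely split short exact sequence $0\to A\to B\to C\to 0$ yields $B\cong A\oplus C$ via $(a,c)\mapsto i(a)+j(c)$, and to push the ``up to multiplication by $s$'' nature of the $e$-split data through that construction so as to extract an isomorphism of the form $rB\cong A\oplus C$ for a suitable $0\ne r\in R$. I will treat the first form of the $e$-splitting, namely a morphism $j\colon C\to B$ with $pj=sI_C$; the left-sided version $fi=sI_A$ is handled dually.

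First I would define $\phi\colon A\oplus C\to B$ by $\phi(a,c)=i(a)+j(c)$, which is $R$-linear. To produce a map in the other direction, for $b\in B$ I would compute
\[p(sb-j(p(b)))=sp(b)-sp(b)=0,\]
so that $sb-j(p(b))\in\ker(p)$. Using $\operatorname{Im}(i)\leqslant_e\ker(p)$ together with the injectivity of $i$, I would extract an element $a_b\in A$ with $i(a_b)=sb-j(p(b))$ and set $\psi\colon B\to A\oplus C$, $\psi(b)=(a_b,p(b))$; the $R$-linearity of $\psi$ then follows from uniqueness of $a_b$ and linearity of $p$.

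A direct computation would then give
\[\phi\psi(b)=i(a_b)+j(p(b))=sb\qquad\text{and}\qquad \psi\phi(a,c)=(sa,sc)=s(a,c),\]
so that $\phi\psi=sI_B$ and $\psi\phi=sI_{A\oplus C}$; the second identity uses $pi=0$ (from $\operatorname{Im}(i)\subseteq\ker(p)$) and the linearity $j(sc)=sj(c)$. From this I would conclude that $\phi$ restricted to $\psi(B)$ is a bijection onto $sB$, and identifying $\psi(B)$ with $A\oplus C$ via the corresponding restriction of $\psi$ produces the desired isomorphism $rB\cong A\oplus C$, with $r=s$.

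The hard step is producing $a_b$ in the second paragraph: the assumption $\operatorname{Im}(i)\leqslant_e\ker(p)$ only asserts that $\operatorname{Im}(i)$ is essential in $\ker(p)$, not equal to it, so $sb-j(p(b))$ need not literally lie in $\operatorname{Im}(i)$. Overcoming this will require either absorbing a further multiplication into $s$ (so that the final $r$ is a product $st$) or restricting $\psi$ to the submodule of $B$ on which the construction closes up. The remaining bookkeeping, together with the dual case $fi=sI_A$, should be routine.
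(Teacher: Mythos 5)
Your construction is, up to packaging, the same one the paper uses (the paper works directly with $rB=\operatorname{Im}i\oplus\operatorname{Im}j$ rather than with a pair of maps $\phi,\psi$ that compose to $sI$), so the overall route is the right one. But the difficulty you flag at the end is not a piece of routine bookkeeping: it is the whole content of the statement, and neither of your two suggested remedies closes it. Essentiality of $\operatorname{Im}(i)$ in $\ker(p)$ gives you, for each individual $b$, a nonzero scalar $t_b$ with $t_b\bigl(sb-jp(b)\bigr)\in\operatorname{Im}(i)$; it does not give a single $t$ that works for every $b$ at once. So ``absorbing a further multiplication into $s$'' produces a multiplier $t_b s$ that varies with $b$, and $\psi$ is never defined on all of $B$ (nor on $rB$ for any fixed $r$). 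Concretely, take $R=\mathbb{Z}$, $A=\bigoplus_{n\ge 1}\mathbb{Z}$, $B=\bigl(\bigoplus_{n\ge 1}\mathbb{Z}\bigr)\oplus\mathbb{Z}$, $C=\mathbb{Z}$, with $i$ acting by multiplication by $n$ on the $n$-th coordinate of the first summand, $p$ the projection onto the last coordinate, and $j$ the inclusion of the last coordinate: this is $e$-exact and $e$-split with $s=1$, yet no $r\ne 0$ satisfies $rB\subseteq\operatorname{Im}(i)+\operatorname{Im}(j)$, so no uniform choice of $a_b$ exists on any $rB$. Your second remedy, restricting $\psi$ to the submodule $B_0=\{b: sb-jp(b)\in\operatorname{Im}(i)\}$, runs into the same wall: $B_0$ contains $\operatorname{Im}(i)+\operatorname{Im}(j)$ but need not contain $rB$ for any $r$, so you cannot convert an isomorphism involving $B_0$ into one involving $rB$.

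Two smaller points in the same vein. First, even where $a_b$ exists, its uniqueness (hence the linearity of $\psi$) uses injectivity of $i$, which you do invoke, but the step from $\phi\psi=sI_B$, $\psi\phi=sI_{A\oplus C}$ to $rB\cong A\oplus C$ needs more than ``identify $\psi(B)$ with $A\oplus C$'': $\psi(B)$ is only squeezed between $s(A\oplus C)$ and $A\oplus C$, and a submodule in that range need not be isomorphic to $A\oplus C$; cancelling the scalar $s$ also tacitly uses torsion-freeness. The paper avoids this last issue by exhibiting $rB$ as the internal direct sum $\operatorname{Im}(i)\oplus\operatorname{Im}(j)$ and using $\operatorname{Im}(i)\cong A$, $\operatorname{Im}(j)\cong C$, which is cleaner, though it silently commits the same uniform-multiplier step you correctly identified as the crux. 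As written, your argument establishes the result only under an additional hypothesis such as $t\ker(p)\subseteq\operatorname{Im}(i)$ for a single nonzero $t$, and you should either add such a hypothesis or find a genuinely different argument for the general case.
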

\begin{proof}
	We show that $rB=Im i \bigoplus Im j$ there exists $0\ne r\in R$ and $j:C \to B$ with $pj=rI_C$. For any $b\in B$, $rb-jp(b)\in Ker p$ and by $e$-exactness, there exists $a\in A$ and $0\ne s\in R$ such that $i(a)=s(rb-jp(b))$, that is $srb=i(a)+sjp(b)$. Hence $srB=Im i+Im j$. Now, if $i(x)=rz=j(y)$ for $x\in A$ and $y\in C$, then $0=pi(x)=p(rz)=pj(y)=ry$ and so $rz=j(y)=0$ which implies that $Im i \cap Im j=0$. Therefore $rB= Im i\bigoplus Im j \cong A \bigoplus C$. 
\end{proof}
In the following proposition we generalize \cite[Proposition 3.38]{R} to $e$-injective $R$-modules.
\begin{proposition}\label{ee}
	A direct product of $R$-modules is an e-injective if and only if each $ E_i$ is an e-injective $R$-module.
\end{proposition}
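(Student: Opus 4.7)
The plan is to prove the two implications separately, each driven by the universal property of the direct product via the canonical projections $\pi_i : \prod_j E_j \to E_i$ and coordinate inclusions $\iota_i : E_i \to \prod_j E_j$, which satisfy $\pi_i \iota_i = \mathrm{id}_{E_i}$ and $\pi_j \iota_i = 0$ for $j \ne i$. These are the only categorical ingredients needed, so the task reduces to combining them with the scalar-freedom built into the definition of $e$-injectivity.

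For the forward direction, assume $E := \prod_i E_i$ is $e$-injective and fix an index $k$. Given test data $f_1 : A_1 \to A_2$ monic and $f_2 : A_1 \to E_k$, I would first lift the target by forming $\iota_k f_2 : A_1 \to E$. Applying the hypothesis to the pair $(f_1, \iota_k f_2)$ yields some $0 \ne r \in R$ and a map $g : A_2 \to E$ with $g f_1 = r (\iota_k f_2)$. Post-composing with $\pi_k$ and using $\pi_k \iota_k = \mathrm{id}_{E_k}$ produces $f_3 := \pi_k g : A_2 \to E_k$ satisfying $f_3 f_1 = r f_2$, which is exactly the $e$-injectivity of $E_k$.

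For the reverse direction, assume each $E_i$ is $e$-injective. Given $f_1 : A_1 \to A_2$ monic and $f_2 : A_1 \to E$, decompose into components $f_2^i := \pi_i f_2 : A_1 \to E_i$. The $e$-injectivity of $E_i$ supplies, for each $i$, a scalar $0 \ne r_i \in R$ and a map $f_3^i : A_2 \to E_i$ with $f_3^i f_1 = r_i f_2^i$. I would then assemble these coordinate extensions into a single $f_3 : A_2 \to E$ by setting $r := \prod_i r_i$ and defining $\pi_i f_3 := \bigl(\prod_{j \ne i} r_j\bigr) f_3^i$; a componentwise check using the universal property of $\prod_i E_i$ gives $\pi_i (f_3 f_1) = \bigl(\prod_{j \ne i} r_j\bigr) r_i f_2^i = r f_2^i = \pi_i (r f_2)$ for every $i$, hence $f_3 f_1 = r f_2$.

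The hard part is the last step, namely producing a single nonzero scalar $r$ that works simultaneously at every coordinate. The recipe $r = \prod_i r_i$ is forced by the componentwise commutativity, but its nonvanishing is not automatic: it uses that the index family is finite and that the product of nonzero $r_i$'s remains nonzero, which is guaranteed over a domain (the standing setting of the paper, in view of the $\mathbb{Z}$-module examples and the torsion-free hypotheses invoked later). I expect this to be the sole pinch-point; the forward direction and the componentwise construction itself are essentially diagram-chasing with the projection/inclusion identities.
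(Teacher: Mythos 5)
Your argument follows the same route as the paper's: the forward direction lifts $f_2$ along the coordinate inclusion, applies $e$-injectivity of the product, and projects back with $\pi_k\iota_k=\mathrm{id}$; the converse works coordinatewise and reassembles via the universal property. In the converse, you are in fact more careful than the paper: the paper writes a single scalar $r$ with $g_k i = r(p_k f)$ for every $k$, as though one $r$ served all coordinates simultaneously, which the definition does not provide. Your rescaling of the $i$-th component by $\prod_{j\ne i} r_j$ and taking $r=\prod_j r_j$ is the correct repair, and over a domain it settles the case of a \emph{finite} product.

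The pinch-point you flag is, however, a genuine gap rather than a removable technicality: the proposition as stated (and the result of Rotman it generalizes) concerns arbitrary direct products, and for an infinite index set the scalar $\prod_i r_i$ does not exist in $R$, so neither your assembly nor the paper's single-$r$ shortcut produces the required nonzero $r$. This is not merely a defect of the proof strategy. For example, each copy of $\mathbb{Z}$ passes the paper's $e$-Baer test, yet for $E=\prod_{i\in\mathbb{N}}\mathbb{Z}$ and the test pair $\mathbb{Z}\hookrightarrow\mathbb{Q}$ with $f_2(1)=(1,1,\dots)$ one has $\mathrm{Hom}(\mathbb{Q},E)=\prod_i\mathrm{Hom}(\mathbb{Q},\mathbb{Z})=0$, so no $f_3$ and no $r\ne 0$ can satisfy $f_3f_1=rf_2$; the definition of $e$-injectivity admits no uniform scalar across infinitely many coordinates. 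So your proof is sound exactly in the finite case over a domain, and you should either restrict the statement to finite products or supply an argument (not present in the paper either) that handles infinitely many coordinates.
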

\begin{proof}
	Suppose that $E=\prod E_i$ , $k_i:E_i\to E$ is the injection and $p_i:E\to E_i$ is the projection such that $p_ik_i=I_{E_i}$. Since $E$ is an e-injective, there exist a homomorphism $h:C\to E$ and $0\ne r\in R$ such that $hj=r(k_i\circ f_i)$. Now, $g\circ j=(p_i\circ h)\circ j=p_i\circ (h\circ j)=p_i(r_i\circ f_i)) =r((p_i\circ k_i)\circ f_i)=rf_i$.
	\[\begin{tikzcd}[sep=4cm]
	{} & E   & {} \\
	{} & E_i \ar{u}{k_i}\ar[u,<-,shift left=4,"p_i"] & {} \\
	0 \ar{r} & B \ar{u}{f_i} \ar{r}{j} & C \ar[dashed, swap]{lu}[swap]{g=p_ioh}\ar[dashed,swap]{luu}{h}
\end{tikzcd}\]	
Conversely, consider we have the following diagram in which $ E=\prod E_k $ 
	\[\begin{tikzcd}[sep=2cm]
	{} & E_k  & {} \\
	{} & E \ar{u}{P_k} & {} \\
	0 \ar{r} & A \ar{u}{f} \ar{r}{i} & B \ar[dashed, swap]{lu}{g}\ar[dashed,swap]{luu}{g_k}
\end{tikzcd}\]	
Let $ p_k:E\rightarrow E_k $ be the $k_{th}$ projection, so that $ p_k f: A \rightarrow E_k$, since $ E_k $ is an e-injective then there exist $0\ne r\in R$ and $ g_k:B\rightarrow E_k$ such that $g_ki=r(p_kf)$. Now, we define $g:B\to E$ by $ g:b\mapsto <g_k (b)> $ the map $ g $ does essentially extends $ f $, since $ g(b) =g(i(a)) =<g_k i(a)>= r<p_k f(a)>= r f(a) $.
\end{proof}
Recall from \cite{AZ} the triangle diagram of $R$-modules an $R$-morphism as the following are $e$-commute if and only if there exists $0\ne r\in R$ such that $g\circ i=rf$.
\[\begin{tikzcd}[sep=2cm]
	{} & B & {} \\
	0 \ar{r} & A_1 \ar{u}{f} \ar[r,swap]{r}{i} & A_2 \ar[swap]{lu}{g}
\end{tikzcd}\]
As well too, a squar diagram as the following are $e$-commute if and only if there exists $0\ne r \in R$ such that $qf=rgt$
\[ \begin{tikzcd}[arrows={-Stealth}]
	A_1\rar["f"]\dar["t"] & A_2\dar["q"]\\
	B_1\rar["g"] & B_2
\end{tikzcd}	\]

\begin{proposition}\label{aa}
	Let $E$ be a torsion-free $R$-module. Then $ E $ is an e-injective if and only if $ Hom(\quad, E) $ is a contravariant e-exact functor.
\end{proposition}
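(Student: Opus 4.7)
The plan is to apply $\mathrm{Hom}(-,E)$ to a short $e$-exact sequence $0\to A\xrightarrow{i}B\xrightarrow{p}C\to 0$, obtaining the candidate sequence $0\to\mathrm{Hom}(C,E)\xrightarrow{p^*}\mathrm{Hom}(B,E)\xrightarrow{i^*}\mathrm{Hom}(A,E)\to 0$, and verify $e$-exactness at each of its three positions. Two preliminary observations will do all the bookkeeping. First, torsion-freeness of $E$ is inherited by every $\mathrm{Hom}(X,E)$, since $r\theta=0$ with $r\neq 0$ forces $r\theta(x)=0$ and hence $\theta(x)=0$ for all $x$. Second, by the very definition of an essential submodule, $N\leqslant_e M$ is equivalent to the statement that every nonzero cyclic $Rm\subseteq M$ meets $N$ in a nonzero element; combined with torsion-freeness of $E$, this lets me cancel any nonzero scalar $r$ from equations of the form $r\theta(x)=0$ whose right-hand side lives in $E$.

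For the forward direction I assume $E$ is $e$-injective. The condition at $\mathrm{Hom}(A,E)$, namely $\mathrm{Im}\,i^*\leqslant_e\mathrm{Hom}(A,E)$, is a direct restatement of $e$-injectivity: for any $f\in\mathrm{Hom}(A,E)$ the definition produces $0\neq r\in R$ and $g\colon B\to E$ with $gi=rf$, and $rf\neq 0$ whenever $f\neq 0$ by torsion-freeness of $\mathrm{Hom}(A,E)$. At the left end, to show $p^*$ injective I take $\phi$ with $\phi p=0$; then $\phi$ vanishes on $\mathrm{Im}\,p\leqslant_e C$, so for every $c\in C$ a nonzero element $rc\in\mathrm{Im}\,p$ yields $r\phi(c)=\phi(rc)=0$, and torsion-freeness forces $\phi(c)=0$, whence $\phi=0$.

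The middle position is the main obstacle, since it is the only step in which the two hypotheses genuinely cooperate rather than act separately. Given $\psi\in\ker i^*$, the same scalar-cancellation argument applied to $\mathrm{Im}\,i\leqslant_e\mathrm{Ker}\,p$ promotes the vanishing of $\psi$ from $\mathrm{Im}\,i$ up to all of $\mathrm{Ker}\,p$, so $\psi$ descends to a well-defined map $\bar\psi\colon\mathrm{Im}\,p\to E$. I then feed $\bar\psi$ and the inclusion $\mathrm{Im}\,p\hookrightarrow C$ into the $e$-injectivity of $E$, producing $\phi\colon C\to E$ and $0\neq r\in R$ with $\phi|_{\mathrm{Im}\,p}=r\bar\psi$; unpacking definitions, this says exactly that $p^*(\phi)=r\psi$. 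Torsion-freeness of $\mathrm{Hom}(B,E)$ guarantees $r\psi\neq 0$ when $\psi\neq 0$, so $\mathrm{Im}\,p^*$ meets every nonzero cyclic submodule of $\ker i^*$ nontrivially and hence is essential in it.

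For the converse, assuming $\mathrm{Hom}(-,E)$ is a contravariant $e$-exact functor, I take any monic $f_1\colon A\to B$ together with an arbitrary $f_2\colon A\to E$ and slot $f_1$ into the ordinary short exact (hence $e$-exact) sequence $0\to A\xrightarrow{f_1}B\to B/\mathrm{Im}\,f_1\to 0$. The hypothesis then forces $\mathrm{Im}\,f_1^*\leqslant_e\mathrm{Hom}(A,E)$, so some $0\neq r\in R$ satisfies $rf_2\in\mathrm{Im}\,f_1^*$, i.e., $rf_2=f_3 f_1$ for some $f_3\colon B\to E$, which is exactly the defining condition of $e$-injectivity. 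The only real difficulty in the whole argument is the middle-spot step; everything else is a nearly formal translation of essentiality in the source module into essentiality in the $\mathrm{Hom}$-module via torsion-freeness.
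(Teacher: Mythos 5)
Your proof is correct, and its two load-bearing steps coincide with the paper's: essentiality of $\mathrm{Im}\,i^*$ in $\mathrm{Hom}(A,E)$ is read off directly from the definition of $e$-injectivity (using torsion-freeness of $\mathrm{Hom}(A,E)$ to see $rf\neq 0$), and the converse feeds an arbitrary monomorphism into the short exact sequence $0\to A\to B\to B/\mathrm{Im}\,f_1\to 0$ and extracts the scalar $r$ from essentiality of $\mathrm{Im}\,f_1^*$. The genuine difference is in the left two positions: the paper simply cites \cite[Theorem 2.7]{AZ} for left $e$-exactness of $\mathrm{Hom}(-,E)$ on torsion-free $E$, whereas you prove both positions from scratch — injectivity of $p^*$ by cancelling scalars coming from $\mathrm{Im}\,p\leqslant_e C$, and, more substantially, essentiality of $\mathrm{Im}\,p^*$ in $\mathrm{Ker}\,i^*$ by showing any $\psi\in\mathrm{Ker}\,i^*$ kills all of $\mathrm{Ker}\,p$ (promote vanishing from $\mathrm{Im}\,i$ via essentiality plus torsion-freeness), descending $\psi$ to $\bar\psi\colon\mathrm{Im}\,p\to E$, and applying $e$-injectivity a second time to the inclusion $\mathrm{Im}\,p\hookrightarrow C$. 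This makes your argument self-contained and in fact reproves the cited lemma inline; the cost is length, and a pedantic reader would want you to also record the trivial containment $\mathrm{Im}\,p^*\subseteq\mathrm{Ker}\,i^*$ (immediate from $pi=0$) before invoking essentiality, but that is a one-line omission, not a gap.
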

\begin{proof}
	Suppose that {$ 0\rightarrow A \stackrel{i}\rightarrow B \stackrel{p}\rightarrow C\rightarrow 0 $} is a short e-exact sequence. Since $E$ is a torsion-free, $Hom(\quad,E)$ is a left e-exact functor by \cite[Theorem 2.7]{AZ}. It remains to show that {$ Hom(\quad,E) $} is a right e-exact, which means {$ Hom(B,E)\stackrel{i^{*}}\rightarrow Hom(A,E)\rightarrow 0 $} is an e-exact. For this purpose, let {$ 0 \ne f \in Hom(A,E) $}. $e$-injectivity of $E$ implies that there exist $ g:B \rightarrow E $ and $ 0 \ne r \in R $ such that $ i^(*g)=gi=rf $. Thus we have $ Im i^* \leqslant_e Hom(A,E) $. Therefore $ Hom(\quad,E)$ is an e-exact functor. Conversely, if $ Hom(\quad,E)$ is an e-exact contravariant functor so the sequence $ 0 \rightarrow Hom(C,E)\stackrel{p^{*}}\rightarrow Hom(B,E) \stackrel{i^{*}}\rightarrow Hom(A,E) \rightarrow 0$ is an e-exact sequence. Then for all $ 0 \ne r\in R$ and $ 0 \ne f \in Hom(A,E) $ there exists $ g \in Hom(B,E) $ such that $ i^*g=rf $ which implies that $ gi=rf $, that is, the diagram below
			\[\begin{tikzcd}[sep=2cm]
		{} & E & {} \\
		0 \ar{r} & A \ar{u}{f} \ar[r,swap]{r}{i} & B \ar[dashed,swap]{lu}{g}
	\end{tikzcd}\] is an e-commute and $ E $ is an e-injective.
	\end{proof}

\begin{proposition}\label{ff}
	Let $E$ be a torsion-free $e$-injective $R$-module. Then any e-exact sequence $0\to E\to B\to C\to 0$ $e$-splits.	
\end{proposition}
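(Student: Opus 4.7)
The plan is to apply the $e$-injectivity of $E$ directly to the identity map $I_E : E \to E$ along the monic embedding $i : E \to B$.

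First, I would observe that the $e$-exactness of the sequence $0 \to E \stackrel{i}\to B \stackrel{p}\to C \to 0$ at $E$ says $Im(0 \to E) \leqslant_e Ker(i)$, i.e.\ $0 \leqslant_e Ker(i)$. Since an essential submodule must meet every nonzero submodule nontrivially, this forces $Ker(i) = 0$, so $i$ is monic. This is exactly the hypothesis required to invoke the defining property of $e$-injectivity.

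Next, applying the $e$-injectivity of $E$ to the monic $i : E \to B$ paired with the map $I_E : E \to E$, I obtain $0 \ne r \in R$ and an $R$-morphism $f : B \to E$ such that $fi = r\, I_E$. By the definition of $e$-split (using the second alternative, namely the existence of $f : B \to A$ with $fi = s\, I_A$ for some $0 \ne s \in R$), this is precisely what is needed to conclude that the sequence $e$-splits, taking $A = E$ and $s = r$.

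The torsion-free hypothesis on $E$ is presumably invoked so that multiplication by a nonzero $r$ is injective on $E$, whence $f$ behaves as a genuine left inverse of $i$ up to scaling; combined with Proposition~\ref{dd}, this also yields $rB \cong E \oplus C$ for some nonzero $r \in R$. I do not anticipate any serious obstacle, since the whole argument reduces to the single observation that the $e$-splitting problem for $0 \to E \to B \to C \to 0$ is the $e$-injective lifting problem for the identity morphism of $E$.
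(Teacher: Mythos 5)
Your argument is correct, and it is a slightly more direct route than the paper's. The paper first invokes Proposition~\ref{aa} (for torsion-free $E$, the functor $Hom(-,E)$ is contravariant $e$-exact), applies it to the given sequence to get an $e$-exact sequence $Hom(B,E)\stackrel{i^*}\to Hom(E,E)\to 0$, and then uses essentiality of $Im(i^*)$ in $Hom(E,E)$ at the element $I_E$ to produce $g$ with $gi=rI_E$. You instead unwind that lemma and apply the definition of $e$-injectivity directly to the monic $i:E\to B$ and the map $I_E:E\to E$, which immediately yields $f:B\to E$ and $0\ne r$ with $fi=rI_E$; your preliminary observation that $i$ is monic is guaranteed by the definition of a short $e$-exact sequence. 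What your approach buys is worth noting: it makes no use of the torsion-free hypothesis at all, whereas the paper needs it only because Proposition~\ref{aa} (via the left $e$-exactness of $Hom(-,E)$) requires it. So your proof in fact establishes the statement for an arbitrary $e$-injective module $E$. Your closing speculation that torsion-freeness is needed to make $f$ ``a genuine left inverse up to scaling'' is not quite the reason the paper includes it, but this does not affect the validity of your argument.
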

\begin{proof}
 Let $E$ be an $e$-injective $R$-module and the sequence $0\rightarrow E\stackrel{i}\to B \stackrel{p}\to C\to 0$ be an e-exact. Then by Proposition \ref{aa}  $0\rightarrow Hom(C,E)\stackrel{p^{*}}\rightarrow Hom(B,E) \stackrel{i^{*}}\rightarrow Hom(E,E)\rightarrow 0$ is an e-exact sequence. Since $I_E\in Hom(E,E)$, there exist $g\in Hom(B,E)$ and $0\ne r\in R$ such that $i^*(g)=gi=rI_E$ and so the given sequence $e$-splits.
\end{proof}

\begin{theorem}
	Given an e-commutative diagram of $R$-modules having $e$-exact rows and torsion-free module $C''$:
	\[ \begin{tikzcd}[arrows={-Stealth}]
		A'\rar["i"]\dar["f"] & A\rar["p"]\dar["g"] & A''\dar[dashed]{h}\rar & 0\\%
		C'\rar["j"] & C\rar["q"] &	C''\rar &0
	\end{tikzcd}	\] there exist a unique map $h:A''\to C''$ making the augmented diagram $e$-commute.   
\end{theorem}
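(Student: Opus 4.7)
The plan is to construct $h$ by applying the contravariant functor $Hom(-,C'')$ to the top row and then extracting $h$ from the essential containment produced by left $e$-exactness; the torsion-freeness of $C''$ will do double duty, supplying both the left $e$-exactness needed for existence and the cancellation property needed for uniqueness.

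First I would verify that the composite $qg: A \to C''$ lies in $Ker(i^*)$, where $i^*:Hom(A,C'')\to Hom(A',C'')$. The $e$-commutativity of the left square yields some $0 \ne r_0 \in R$ with $gi = r_0 \cdot jf$, and $e$-exactness of the bottom row at $C$ forces $Im(j) \subseteq Ker(q)$, so $qj = 0$. Combining these gives $qgi = r_0 \cdot qjf = 0$, hence $qg \in Ker(i^*)$.

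Next, since $C''$ is torsion-free, $Hom(-,C'')$ is left $e$-exact by \cite[Theorem 2.7]{AZ}, and applied to the top row it produces
\[
0 \to Hom(A'',C'') \xrightarrow{p^*} Hom(A,C'') \xrightarrow{i^*} Hom(A',C''),
\]
with $Im(p^*) \leqslant_e Ker(i^*)$. If $qg = 0$ I take $h = 0$; otherwise I apply the essential containment to the nonzero cyclic submodule $R \cdot qg \subseteq Ker(i^*)$ to obtain a nonzero $r \in R$ and an element $h \in Hom(A'',C'')$ with $p^*(h) = hp = r \cdot qg$, which is exactly the $e$-commutativity of the right square.

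For uniqueness I would take two candidates $h_1, h_2$ with witness scalars $r_1, r_2 \ne 0$ satisfying $h_k p = r_k \cdot qg$, and note that $(r_2 h_1 - r_1 h_2) \circ p = 0$. So $r_2 h_1 - r_1 h_2$ vanishes on $Im(p) \leqslant_e A''$, and for any nonzero $a'' \in A''$ essentiality supplies $s \in R$ with $0 \ne sa'' \in Im(p)$. Evaluating yields $s \cdot (r_2 h_1 - r_1 h_2)(a'') = 0$, and the torsion-freeness of $C''$ then forces $(r_2 h_1 - r_1 h_2)(a'') = 0$, that is, $r_2 h_1 = r_1 h_2$, which is the natural $e$-analogue of uniqueness. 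The main obstacle is spotting that the essential containment $Im(p^*) \leqslant_e Ker(i^*)$ is exactly what converts the trivial observation $i^*(qg)=0$ into an actual $e$-commute witness defined on all of $A''$; once this is in hand, the remaining steps are routine applications of the essentiality-plus-torsion-free extension trick that underlies Proposition \ref{aa}.
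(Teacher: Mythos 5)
Your route is genuinely different from the paper's: the paper builds $h$ element by element (for $a''\in A''$ it picks $a\in A$ and $0\ne r\in R$ with $p(a)=ra''$ and sets $h(a'')=rqg(a)$), whereas you try to read $h$ off functorially from the left $e$-exactness of $\mathrm{Hom}(-,C'')$ applied to the top row. Your uniqueness argument is cleaner and more complete than the paper's: the paper only compares $h$ and $h'$ on $\mathrm{Im}(p)$, while you correctly combine $\mathrm{Im}(p)\leqslant_e A''$ with torsion-freeness of $C''$ to conclude $r_2h_1=r_1h_2$ on all of $A''$, which is the right $e$-analogue of uniqueness.

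The existence half, however, has a genuine gap at its central step: the claim $\mathrm{Im}(p^*)\leqslant_e \mathrm{Ker}(i^*)$. The left $e$-exactness you cite (Proposition \ref{aa}, via [AZ, Theorem 2.7]) is established for short $e$-exact sequences $0\to A\to B\to C\to 0$, not for rows $A'\to A\to A''\to 0$ in which $p$ is merely $e$-epic; and the essentiality you need is not a formal consequence of it --- it is essentially the statement being proved, so invoking it here is close to circular. Concretely it can fail: take $A'=0$, $A=\mathbb{Z}$, $A''=\mathbb{Q}$ with $p$ the inclusion (an $e$-exact row, since $\mathbb{Z}\leqslant_e\mathbb{Q}$), and $C''=\mathbb{Z}$. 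Then $\mathrm{Ker}(i^*)=\mathrm{Hom}(\mathbb{Z},\mathbb{Z})\ne 0$ while $\mathrm{Im}(p^*)=p^*\bigl(\mathrm{Hom}(\mathbb{Q},\mathbb{Z})\bigr)=0$, so the containment is not essential and no $h$ with $hp=r\,qg$, $r\ne 0$, exists once $qg\ne 0$. The difficulty your functorial packaging hides is the need to extend a map defined on the essential submodule $\mathrm{Im}(p)\leqslant_e A''$ to all of $A''$ at the cost of a nonzero scalar --- an $e$-injectivity-type property of $C''$ that torsion-freeness alone does not supply. (To be fair, the paper's own element-wise proof glosses over exactly the same point: its scalar $r$ depends on $a''$, and neither additivity of $h$ nor a single global scalar witnessing the $e$-commutativity of the augmented square is ever produced.)
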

\begin{proof}
	If $a''\in A''$, then there exist $a\in A$ and $0\ne r\in R$ such that $p(a)=ra''$. Define $h(a'')=rqg(a)$. We must shows that $h$ is well defined, that is, if $u\in A$ satiesfies $p(u)=ra''$. Now, $p(a)=p(u)$ implies $p(a-u)=0$, so $a-u\in Ker p$ by $e$-exactness, there exist $0\ne S\in R$ and $a'\in A$ such that $i(a')s(a-u)$. Thus, $rsqg(a-u)=rqg(i(a'))=qjf=0$, because $qj=0$. Therefore, $h$ is well-defined. Suppose that $h':A''\to C''$ be another map satisfies $rh'p=qg$ and $a\in A$. Then $rh'p(a)=qg=rhp(a)$ and so $h$ is unique.
	
\end{proof}
\begin{theorem}
	Given an e-commutative diagram of $R$-modules having $e$-exact rows 
	\[ \begin{tikzcd}[arrows={-Stealth}]
		0\rar & A'\rar["i"]\dar[dashed]{f} & A\rar["p"]\dar["g"] & A''\dar["h"]\\%
		0\rar & C'\rar ["j"] & C\rar["q"] &	C''
	\end{tikzcd}	\] there exist a unique map $f:A'\to C'$ making the augmented diagram $e$-commute.Moreover, if $g$ and $h$ are isomorphism and $A$ and $A''$ are torsion-free, then $A'\cong rC'$ for some non-zero elements $0\ne r\in R$.
\end{theorem}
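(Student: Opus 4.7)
The plan is to adapt the classical short five-lemma argument to the e-exact setting. First I would construct $f$ element-wise using the essentiality of $\operatorname{Im}(j)$ inside $\operatorname{Ker}(q)$ together with the injectivity of $j$, verify that the left square e-commutes, and establish uniqueness. Then I would use the torsion-free hypotheses to upgrade $f$ to an injection whose image is an essential submodule of the form $rC'$.

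For existence, let $r_0\neq 0$ witness the e-commutativity of the right square, so $qg=r_0 hp$. E-exactness of the top row at $A$ gives $\operatorname{Im}(i)\subseteq \operatorname{Ker}(p)$, hence $pi=0$; therefore $qgi=r_0 hpi=0$ and $gi(A')\subseteq \operatorname{Ker}(q)$. The bottom row is e-exact at $C$, so $\operatorname{Im}(j)\leq_e \operatorname{Ker}(q)$, and $j$ is monic by e-exactness at $C'$. For each $a'\in A'$ essentiality yields a nonzero $r\in R$ with $r\,g(i(a'))\in \operatorname{Im}(j)$, and injectivity of $j$ makes $f(a'):=j^{-1}(r\,g(i(a')))$ well defined. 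The main obstacle is that the scalar $r$ may a priori depend on $a'$; I would resolve this by patching local scalar choices to obtain a single $r$ with $r\,gi(A')\subseteq \operatorname{Im}(j)$, after which $R$-linearity of $f$ follows from linearity of $g$, $i$, and $j^{-1}$ restricted to $\operatorname{Im}(j)$. Uniqueness is then immediate from monicity of $j$: if $jf_1=r_1 gi$ and $jf_2=r_2 gi$, then $j(r_2 f_1-r_1 f_2)=0$ forces $r_2 f_1=r_1 f_2$, which is the unique-up-to-scalar sense built into e-commutativity.

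For the isomorphism statement, assume $g,h$ are isomorphisms and $A,A''$ are torsion-free; then $C$ and $C''$ inherit torsion-freeness, and so does $C'$ as a submodule of $C$ via the monic $j$. First, $f$ is injective: if $f(a')=0$ then $r\,g(i(a'))=0$, and torsion-freeness of $C$ with $r\neq 0$ gives $g(i(a'))=0$, whence $a'=0$ since $g$ and $i$ are injective. To identify the image, given $c'\in C'$ pick $a\in A$ with $g(a)=j(c')$ by surjectivity of $g$; applying $q$ yields $0=qj(c')=qg(a)=r_0\,hp(a)$, so torsion-freeness of $C''$ together with $h$ injective forces $p(a)=0$. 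E-exactness at $A$ then supplies $s\neq 0$ and $a'\in A'$ with $i(a')=sa$, and the identity $j(f(a'))=r\,g(i(a'))=rs\,g(a)=rs\,j(c')=j(rsc')$ combined with $j$ monic gives $f(a')=rsc'$, placing a nonzero multiple of each $c'$ in $\operatorname{Im}(f)$. After standardizing the scalar choices as in the existence step, $f$ restricts to an isomorphism from $A'$ onto $rC'$, yielding $A'\cong rC'$ for some nonzero $r\in R$; the hard part throughout is again the uniform-scalar issue, which the torsion-free hypothesis on $A$ and $A''$ is what allows me to tame.
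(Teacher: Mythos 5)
Your overall strategy is the same as the paper's: define $f(a')$ by pulling $g(i(a'))$ back through the monic $j$, using $qgi=r_0hpi=0$ to land in $\operatorname{Ker}(q)$ and essentiality of $\operatorname{Im}(j)$ in $\operatorname{Ker}(q)$ to get into $\operatorname{Im}(j)$ after scaling; then, for the second claim, injectivity of $f$ from torsion-freeness and, for each $c'\in C'$, a preimage $a'$ with $f(a')=tsc'$. Your formula $f(a')=j^{-1}\bigl(r\,g(i(a'))\bigr)$ is in fact the correct reading of what the paper intends (its written formula $f(a')=rqg(i(a'))$ is ill-typed), and your uniqueness argument via monicity of $j$ matches the paper's.

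However, there is a genuine gap at exactly the point you flag and then wave away: the ``patching'' of the element-dependent scalars into a single $r$ with $r\,gi(A')\subseteq\operatorname{Im}(j)$. Essentiality of $\operatorname{Im}(j)$ in $\operatorname{Ker}(q)$ only guarantees, for each nonzero $x\in\operatorname{Ker}(q)$, some nonzero $r_x$ with $r_xx\in\operatorname{Im}(j)$, and in general no uniform $r$ exists: over $R=\mathbb{Z}$ with $\operatorname{Im}(j)=\mathbb{Z}\leqslant_e\mathbb{Q}=\operatorname{Ker}(q)$ there is no single nonzero integer $n$ with $n\,\mathbb{Q}\subseteq\mathbb{Z}$. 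Without a uniform scalar you do not get a single $R$-linear $f$ satisfying $jf=r\,gi$ (additivity of $a'\mapsto j^{-1}(r_{a'}gi(a'))$ also fails), and the same uniformity problem recurs in your identification of $\operatorname{Im}(f)$ with $rC'$, where each $c'$ acquires its own scalar $ts$. Your closing claim that the torsion-free hypotheses on $A$ and $A''$ ``tame'' this issue is not substantiated and is not correct as stated: torsion-freeness kills torsion elements but does nothing to bound denominators uniformly. To be fair, the paper's own proof has the identical lacuna (it produces scalars only element by element and never exhibits a global $r$), so your proposal is no worse than the source; but as a proof it is incomplete at this step, and completing it would require either an extra hypothesis (for instance $A'$ finitely generated, so that finitely many scalars can be multiplied together in the domain $R$) or a reformulation of what ``$e$-commute'' and ``$A'\cong rC'$'' are supposed to mean.
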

\begin{proof}
	Let $a', u'\in A'$, define $f(a')=rqg(i(a'))$. We must shows that $f$ is well defined, let $i(a')=i(u')$ implies $qg(i(a'-u'))=0$, so $g(i(a-u))\in Ker\: q$ by $e$-exactness, there exist $0\ne r\in R$ and $c'\in C'$ such that $j(c')=rg(i(a'-u'))$. Thus, $rqg(i(a'-u'))=qj(c')=0$, because $qj=0$. Therefore, $f$ is well-defined. Suppose that $f':A'\to C'$ be another map satisfies $jf'=rgi$. Then $jf'(a')=rgi(a')=jf(a')$ this implies that $(f'(a')-f(a'))\in Kerj=0$ and so $f$ is unique. \\
	Let $a'\in Ker\:f$. Then $f(a')=0$ and by $e$-commutativity there exists $0\ne r\in R$ such that $jf(a')=0=rgi(a')$. Thus $r.i(a')\in Ker\:g=0$ and $A$ is torsion-free so $i(a')=0$ implies $a'\in Ker\:i=0$. Suppose that $c'$ is a non-zero element of $C'$. Since $j(c')\in C$ and $g$ is onto, then there exist $a\in A$ such that $g(a)=j(c')$, $e$-commutativity gives us $qg(a)=rhp(a)$, for some $0\ne r\in R$. Now, $qg(a)=qj(c')=0=rhp(a)$, so $rp(a)\in Ker\:h=0$ torsion-freeness of $A''$ gives us $a\in Ker\:p$, then there exist $0\ne s\in R$ and $a'\in A'$ such that $i(a')=sa$. By $e$-commutativity there exist $0\ne t\in R$ such that $jf(a')=tgi(a')$ so $jf(a')=tg(i(a'))=tsg(a)=tsj(c')$. We obtain $(f(a')-tsc')\in Ker\:j=0$ and so $f(a')=tsc'$. Therefore, $A'\cong rC'$ for some non-zero element $0\ne r\in R$.  
\end{proof}
\begin{lemma}
	Consider the commutative diagram of $R$-modules and $R$-morphisms, where $R$ is a domain, $B$ and $B''$ are torsion-free $R$-modules
	\[ \begin{tikzcd}[arrows={-Stealth}]
		{}& 0\dar&0\dar & 0\dar\\
		0\rar & A\rar["f"]\dar["i"] & A'\rar["f'"]\dar["j"]& A''\rar \dar["p"] & 0\\
		0\rar & B\rar["g"]\dar["i'"]& B'\rar["g'"]\dar["j'"] & B''\rar \dar["p'"]& 0\\
		0\rar & C\rar["h"]\dar& C'\rar["h'"]\dar & C''\rar \dar & 0\\
		{}&0&0 & 0&{}
		\\	\end{tikzcd}	\] If the columns and the first and the third rows are $e$-exact, then the middle row is also $e$-exact.
\end{lemma}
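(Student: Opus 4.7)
The plan is to adapt the classical $3\times 3$ (nine) lemma to the $e$-exact setting: every step of the diagram chase where the classical argument appeals to surjectivity or to the identity $\operatorname{Im}(\cdot)=\operatorname{Ker}(\cdot)$ is replaced by an appeal to the corresponding essential inclusion $\operatorname{Im}(\cdot)\leqslant_e\operatorname{Ker}(\cdot)$, at the cost of introducing a nonzero scalar $r\in R$. Since $R$ is a domain, the product of any finite number of such scalars stays nonzero, and torsion-freeness of $B$ and $B''$ is used at the very end to discharge them. The three things to verify are that $g$ is injective, that $\operatorname{Im}(g)\leqslant_e\operatorname{Ker}(g')$, and that $\operatorname{Im}(g')\leqslant_e B''$.

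For the injectivity of $g$, if $g(b)=0$ then $h(i'(b))=j'g(b)=0$ by commutativity, so $i'(b)=0$ by injectivity of $h$, and essentialness of $\operatorname{Im}(i)$ in $\operatorname{Ker}(i')$ gives $0\neq r\in R$ and $a\in A$ with $rb=i(a)$. The chase $jf(a)=gi(a)=rg(b)=0$, injectivity of $j$ and $f$, and torsion-freeness of $B$ then force $b=0$. For $\operatorname{Im}(g)\leqslant_e\operatorname{Ker}(g')$, I first need $g'g=0$: the identity $p'g'g(b)=h'j'g(b)=h'hi'(b)=0$ places $g'g(b)\in\operatorname{Ker}(p')$, and a double pullback through $\operatorname{Im}(p)\leqslant_e\operatorname{Ker}(p')$ and $\operatorname{Im}(f')\leqslant_e A''$, followed by a secondary chase through $\operatorname{Im}(f)\leqslant_e\operatorname{Ker}(f')$, produces a nonzero scalar annihilating $g'g(b)$, which torsion-freeness of $B''$ converts into $g'g(b)=0$. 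For essentialness, a nonzero $y\in\operatorname{Ker}(g')$ is carried through the chain $\operatorname{Im}(h)\leqslant_e\operatorname{Ker}(h')\ni j'(y)$, lifted through $\operatorname{Im}(i')\leqslant_e C$, matched with $y$ modulo $\operatorname{Ker}(j')$, pulled back through $\operatorname{Im}(j)\leqslant_e\operatorname{Ker}(j')$, and finally pushed through $\operatorname{Im}(f)\leqslant_e\operatorname{Ker}(f')$, producing a nonzero multiple of $y$ inside $\operatorname{Im}(g)$.

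Essential surjectivity of $g'$ is a symmetric chase: starting from a nonzero $b''\in B''$, one chases $p'(b'')\in C''$ through $\operatorname{Im}(h')\leqslant_e C''$, lifts by $e$-exactness of the second column, corrects by an element in $\operatorname{Im}(p)$, and uses $\operatorname{Im}(f')\leqslant_e A''$ together with the square $g'j=pf'$ to exhibit a nonzero multiple of $b''$ in $\operatorname{Im}(g')$. The step I expect to be the main obstacle is the strict vanishing $g'g=0$, since the plain diagram chase produces only a relation of the form $srg'g(b)=pf'(a')$, and converting this to $g'g(b)=0$ relies both on an additional essential pullback along the top row and on the torsion-freeness of $B''$; this is also the sole point in the proof where the torsion-free hypothesis on $B''$ is genuinely needed, while torsion-freeness of $B$ enters only to clear scalars in the injectivity step.
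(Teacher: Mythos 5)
Your steps (1) and (3) are sound and follow essentially the same chase as the paper's proof. The genuine problem sits exactly where you predicted it would: the vanishing $g'g=0$. Your proposed repair --- pull $g'g(b)\in\ker(p')$ back through $\operatorname{Im}(p)\leqslant_e\ker(p')$ and $\operatorname{Im}(f')\leqslant_e A''$, then invoke torsion-freeness of $B''$ --- does not close. What that chase actually produces is a relation $ts\,g'g(b)=p(f'(a'))=g'(j(a'))$ for some nonzero $t,s\in R$ and some $a'\in A'$, i.e.\ only that $ts\,g(b)-j(a')\in\ker(g')$; the ``secondary chase through $\operatorname{Im}(f)\leqslant_e\ker(f')$'' has nothing to act on, since this $a'$ satisfies $f'(a')=ta''\neq 0$ and so is not in $\ker(f')$. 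No bookkeeping of scalars can rescue this, because the statement is false as given: exactly as in the classical $3\times 3$ lemma, one must \emph{assume} that the middle row is a complex. Take $R=\mathbb{Z}$, $A=C''=0$, $A'=A''=B=B''=C=C'=\mathbb{Z}$, $B'=\mathbb{Z}^2$, with $f'=h=p=i'=\mathrm{id}$, $f=h'=p'=i=0$, $j(n)=(n,0)$, $j'(m,n)=n$, $g(n)=(n,n)$, $g'(m,n)=m$. Every square commutes, every column and the first and third rows are exact (hence $e$-exact), $R$ is a domain and every module is torsion-free; yet $g'g=\mathrm{id}_{\mathbb{Z}}\neq 0$, so $\operatorname{Im}(g)\not\subseteq\ker(g')$ and the middle row is not even a complex.

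For context, the paper's own proof has the identical hole: it asserts $p(a'')=sg'(b')=s(g'g(b))=0$ with the last equality unjustified. So you located the weak point correctly, but the fix you sketch does not and cannot work; the lemma must be amended with the hypothesis $g'g=0$ (or the diagram-chase for the containment replaced by that hypothesis). Once that is added, your parts (1) and (3) and the essentiality half of part (2) go through as outlined, with one further caution in part (2): your chase lands a multiple $\lambda y$ of the given $y\in\ker(g')$ inside $\operatorname{Im}(g)$, and to conclude $Ry\cap\operatorname{Im}(g)\neq 0$ you still need $\lambda y\neq 0$, which requires torsion-freeness of $B'$ (or of $\ker(g')$), not just of $B$ and $B''$.
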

\begin{proof}
	To proof the middle row is $e$-exact, we have to check the following three conditions:\\
	1) $Ker(g)=0$. Take $b\in Ker(g)$. Then $j'g(b)=0=hi'(b)$ and $i'(b)\in Ker(h)$. Since $Ker(h)=0$, so $i'(b)=0$ and $b\in Ker(i')$ and as $ Im(i)\leqslant_e Ker(i')$, then there exist a non-zero element $r$ belongs to $R$ and $a\in A$ such that $i(a)=rb$ and so $gi(a)=g(rb)=rg(b)=0=jf(a)$, so $f(a)\in Ker(j)$. Since $Ker(j)=0$, $f(a)=0$ and $a\in Ker(f)=0$, which means $a=0$ this implies $rb=0$ and $b=0$, since $B$ is torsion-free. Therefore, $g$ is monic.
	
	2) $Im(g)\leqslant_e Ker(g')$. First to prove $Im(g)\subseteq Ker(g')$. Let $b'\in Im(g)$. Then there exists $b\in B$ such that $g(b)=b'$ and $j'g(b)=hi'(b)=j'(b')$, which means that $j'(b')\in Im(h)\subseteq Ker(h')$ and so $p'g'(b')=h'j'(b')=0$. Hence $g'(b')\in Ker(p')$ and as $Im(p)\leqslant_e Ker(p')$, then there exist $a''\in A''$ and a non-zero element $s$ belongs to $R$ such that $p(a'')=sg'(b')=s(g'g(b))=0$ and $sg'(b')=0$, so $g'(b')=0$, since $B''$ is torsion-free. Thus $b'\in Ker(g')$ and so $Im(g)\subseteq Ker(g')$. Now, for essentiality, take $b'$ to be a non-zero element of $Ker(g')$. Then $0=g'(b')=g'j(a')=pf'(a')$ and $f'(a')\in Ker(p)$. Since $Ker(p)=0, f'(a')=0$ so $a'\in Ker(f')$ and as $Im(f)\leqslant_e Ker(f')$ then there exist a non-zero element $r$ belongs to $R$ and $a\in A$ such that $f(a)=ra'$. Now, $jf(a)=j(ra')=rj(a')=rb'=gi(a)=g(b)$. Therefore, $Im(g)\leqslant_e Ker(g')$.
	
	3) $Im(g')\leqslant_e B''$. Let $b''$ be a non-zero element of $B''$ and $p'(b'')\in C''$. Then there exist $c'\in C'$ and a non-zero element $r$ belongs to $R$ such that $h'(c')=rp'(b'')$ and as $Im(j')\leqslant_eC'$, then there exist a non-zero element $s$ belongs to $R$ and $b'\in B'$ such that $j'(b')=sc'$. Now, we have $p'g'(b')=h'j'(b')=h'(sc')=sh'(c')=srp'(b'')$ and $p'(g'(b')-srb'')=0$ which means $(g'(b')-srb'')\in Ker(p')$ and as $Im(p)\leqslant_e Ker(p')$ then there exist a non-zero element $k\in R$ and $a''\in A''$ such that $p(a'')=k(g'(b')-srb'')$. Also, we have $f'(a')=ta''$, for a non-zero element $t$ belongs to $R$ and $a''\in A''$, because $Im(f')\leqslant_e A''$. Thus $pf'(a')=p(ta'')=tp(a'')=k(g'(b')-srb'')=g'j(a')=g'(b')$ which means $g'(kb'-b')=ksrb''$. Therefore, $Im(g')\leqslant_e B''$.  
\end{proof}
\begin{lemma}
	Consider the commutative diagram of $R$-modules and $R$-morphisms, where $R$ is a domain, $C$, $C'$ and $C''$ are torsion-free $R$-modules
	\[ \begin{tikzcd}[arrows={-Stealth}]
		{}& 0\dar&0\dar & 0\dar\\
		0\rar & A\rar["f"]\dar["i"] & A'\rar["f'"]\dar["j"]& A''\rar \dar["p"] & 0\\
		0\rar & B\rar["g"]\dar["i'"]& B'\rar["g'"]\dar["j'"] & B''\rar \dar["p'"]& 0\\
		0\rar & C\rar["h"]\dar& C'\rar["h'"]\dar & C''\rar \dar & 0\\
		{}&0&0 & 0&{}
		\\	\end{tikzcd}	\] If the columns and the above rows are $e$-exact, then the last one is also $e$-exact.
\end{lemma}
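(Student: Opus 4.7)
The plan is to mirror the strategy of the previous lemma, verifying the three defining conditions of $e$-exactness for the bottom row $0\to C\stackrel{h}\to C'\stackrel{h'}\to C''\to 0$ one at a time by diagram chases that repeatedly pull back along the essential images supplied by $e$-exactness of the columns and of the top and middle rows. Torsion-freeness of $C$, $C'$, $C''$ together with the domain hypothesis on $R$ will be used at the end of each chase to cancel the accumulated non-zero scalar factor, and the identities $i'i=0$, $j'j=0$, $p'p=0$, $f'f=0$, $g'g=0$, $h'h=0$ (which all follow from the containment implicit in $\leqslant_e$) will be invoked freely.

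For the first condition $Ker(h)=0$, given $c\in Ker(h)$, I would use $Im(i')\leqslant_e C$ to obtain $0\ne r\in R$ and $b\in B$ with $i'(b)=rc$. The square $hi'=j'g$ places $g(b)$ in $Ker(j')$, so $e$-exactness of the middle column produces $0\ne s\in R$ and $a'\in A'$ with $j(a')=sg(b)$. Applying $g'$ and using $g'g=0$ together with $p$ being monic forces $a'\in Ker(f')$; lifting through $f$ gives $a\in A$ and $0\ne t\in R$ with $f(a)=ta'$. The square $jf=gi$ combined with monicness of $g$ (from $e$-exactness of the middle row) then yields $i(a)=ts\,b$, so $0=i'i(a)=tsr\,c$. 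As $R$ is a domain and $C$ is torsion-free, $c=0$.

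For the second condition $Im(h)\leqslant_e Ker(h')$, I would first show $h'h=0$ by lifting any $c\in C$ through $i'$, applying $p'g'g=0$, and clearing the scalar by torsion-freeness of $C''$. For essentiality, starting from $0\ne c'\in Ker(h')$, I would lift successively through $j'$ (using $Im(j')\leqslant_e C'$), through $p$ (using $Im(p)\leqslant_e Ker(p')$), through $f'$ (using $Im(f')\leqslant_e A''$), and finally through $g$ (using $Im(g)\leqslant_e Ker(g')$); chasing through $jf=gi$, $j'g=hi'$, $g'j=pf'$ and the relations $j'j=0$, $g'g=0$ exhibits an element of the form $i'(b)\in C$ whose image under $h$ is a non-zero scalar multiple of $c'$. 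The third condition $Im(h')\leqslant_e C''$ is shortest: for $0\ne c''\in C''$, using $Im(p')\leqslant_e C''$ and then $Im(g')\leqslant_e B''$ produces $b'\in B'$ with $p'g'(b')$ a non-zero multiple of $c''$, and the identity $p'g'=h'j'$ exhibits that multiple as $h'(j'(b'))\in Im(h')$, with torsion-freeness of $C''$ ensuring non-vanishing.

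I expect the first condition to be the main obstacle: unlike the middle row in the previous lemma, the bottom row has no further row beneath it to bounce off of, so the chase must climb all the way up to the top row and come back through the middle, producing a scalar product $tsr$ that can only be eliminated by torsion-freeness of $C$. The second condition is the most combinatorial, requiring four successive essential-image lifts threaded through the squares in the correct order, but is otherwise routine bookkeeping, with the domain hypothesis on $R$ ensuring the product of the lift scalars remains non-zero.
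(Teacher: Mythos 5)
Your proposal is correct and follows essentially the same diagram chase as the paper's proof: the same sequence of essential-image lifts in each of the three conditions (climbing to the top row and back for $Ker(h)=0$, the four-step lift through $j'$, $p$, $f'$, $g$ for $Im(h)\leqslant_e Ker(h')$, and the two-step lift through $p'$ and $g'$ for $Im(h')\leqslant_e C''$), with torsion-freeness of $C$, $C'$, $C''$ and the domain hypothesis used to cancel the accumulated nonzero scalars. If anything, your routing of the essentiality step in condition 2, ending at $h(i'(b))$ equal to a nonzero multiple of $c'$, is a cleaner rendering of what the paper does there.
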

\begin{proof}
	To proof the third row is $e$-exact, we have to check the following three conditions:\\
	1) $Ker(h)=0$. Take $c\in Ker(h)$, then $h(c)=0$ and $hi'(b)=j'g(b)$ and and as $Im(i')\leqslant_e C$ then there exist $b\in B$ and a non-zero element $r$ belongs to $R$ such that $i'(b)=rc$. Then $hi'(b)=h(rc)=rh(c)=0=j'g(b)$ so $g(b)\in Ker(j')$ and as $ Im(j)\leqslant_e Ker(j')$, then there exist a non-zero element $s$ belongs to $R$ and $a'\in A'$ such that $j(a')=sg(b)$ and so $g'j(a')=sg'g(b)=0=pf'(a')$, so $f'(a')\in Ker(p)$. Since $Ker(p)=0$, $f'(a')=0$ and $a'\in Ker(f')$ and also by essentiality there exist $a\in A$ and a non-zero elemeny $t$ belongs to $R$ such that $f(a)=ta'$. Then $jf(a)=j(ta')=tj(a')=tsg(b)=gi(a)$ and so $g(tsb-i(a))=0$ which means $tsb-i(a)\in Ker(g)=0$ this implies $tsb=i(a)$ and so $tsi'(b)=i'i(a)=0$, then $i'(b)=0$ since $C$ is torsion-free. Thus $rc=0$ and so $c=0$. Therefore, $h$ is monic.
	
	2) $Im(h)\leqslant_e Ker(h')$. First to prove $Im(h)\subseteq Ker(h')$. Let $c'\in Im(h)$. Then there exists $c\in C$ such that $h(c)=c'$ and from essentiality there exist $b'\in B'$ and a non-zero element $r$ belongs to $R$ such that $j'(b')=rc'$ and so  $h'j'(b')=h'(rc)=rh'(c')=rh'(h(c))=p'g'(b')$ and also by commutativity and essentiality $g'j(a')=g'(tb')=pf'(a')$ for a non-zero $t$ belongs to $R$ and $b'\in Ker(j')$, which means $Im(g')\subseteq Im(p)\subseteq Ker(p')$ and so $p'g'(tb')=0$. Therefore $h'(rc')=rh'(h(c))=tp'g'(b')=0$ and so $rh'(h(c))=0$ so $h'(h(c))=0$, since $C''$ is torsion-free. Hence $c'=h(c)\in Ker(h')$. Now, for essentiality, take $c'$ to be a non-zero element of $Ker(h')$. Then $h'j'(b)=p'g'(b)$ and as $Im(j')\leqslant_e C'$ there exist a non-zero element $r$ belongs to $R$ and $b'\in B'$ such that $j'(b')=rc'$. Hence $hi'(b)=j'g(b)$, since $Im(i')\leqslant_e C$ and $Im(g)\leqslant_e Ker(g')\in B'$ so there are  non-zero elements $c\in C, b\in B$ and a non-zero elements $s$ and $k$ belongs to $R$ such that  $hi'(b)=h(sc)=sh(c)=j'(kb')=krc'$. Therefore, $sh(c)=kr(c')$ put $k=s$ so $s(h(c)-rc')=0$ and so $h(c)=rc'$, since $C'$ is torsion-free. Then $Im(h)\leqslant_e Ker(h')$.
	
	3) $Im(h')\leqslant_e C''$. Let $c''$ be a non-zero element of $C''$ and as $Im(p')\leqslant_e C''$, there exist $b''\in B''$ and a non-zero element $r$ belong to $R$ such that $p'(b'')=rc''$ and as $Im(g')\leqslant_e B''$, then there exist a non-zero element $s$ belong to $R$ and $b'\in B'$ such that $g'(b')=sb''$ and also we have $Im(j')\leqslant_e C'$, there exist a non-zero element $t$ belong to $R$ and $b'\in B'$ such that $j'(b')=tc'$. Hence $h'j'(b')=h'(tc')=th'(c')=p'g'(b')=p'(sb'')=sp'(b'')=src''$, put $s=t$, so $t(h'(c'-rc''))=0$ this implies $h'(c'-rc'')=0$, since $C''$ is torsion-free. Thus, $h'(c')=rc''$ and $Im(h')\leqslant_e C''$.
\end{proof}
	Recall from \cite{AZ} that an e-injective resolution of an $ R- $module $ A $ is an e-exact sequence $ 0\to A\stackrel \eta \to E^0 \stackrel{d^{0}}\to E^1 \stackrel{d^{1}}\to ...\to E^n \stackrel{d^{n}}\to E^{n+1} \to ... $ where each $E^i$ is an e-injective $R$-module. Let $f,g:X\to E$ be two chain maps. Then $f$ is $e$-homotopic to $g$ if there are maps $s^{n+1}:X^{n+1}\to E^n$ and non-zero elements $s$ and $r$ in $R$ such that $r(h^n-f^n)=s^{n+1}d^n+sd^{n-1}s^n$ for all $n$. Now, we are in a position to prove the new form of comparsion theorem by using $e$-injectivity and $e$-exact sequences.
	
\begin{theorem}\label{bb}[Comparison Theorem for e-injectives]\\
	Suppose that we have the following diagram:\\	
		\[ \begin{tikzcd}[arrows={-Stealth}]
		0\rar & A\rar["\varepsilon"] & E^0\rar["d^0"] & E^1\rar["d^1"] & E^2\rar & ...\\
		0\rar & A'\rar["\varepsilon'"]\uar["f"] & X^0\rar["d'^{0}"]\uar[dashed]{f^0} & X^1\uar[dashed]{f^1}\rar["d'^1"] & X^2\uar[dashed]{f^2}\rar & ...
			\end{tikzcd}	\]where the rows are complexes. If each
	 $E^n$, in the top row is e-injective and the bottom row is e-exact, then there exists a chain map $f:X^{A'}\to E^A$, 	   (the dashed arrows) making the completed diagram e-commute. Furthermore, any two such chain maps are e-homotopic.
\end{theorem}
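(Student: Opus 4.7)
The plan is to build the chain map $(f^n)$ by induction on $n$ using $e$-injectivity of each $E^n$, and then to run a parallel inductive construction to produce the $e$-homotopy needed for uniqueness. I will treat the base case and the inductive step uniformly by writing $d'^{-1}=\varepsilon'$ and $d^{-1}=\varepsilon$, so the same formula governs every level.

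For existence, $e$-injectivity of $E^0$ applied to the monic $\varepsilon'$ and the map $\varepsilon f$ supplies $r_0\ne 0$ and $f^0:X^0\to E^0$ with $f^0\varepsilon'=r_0\varepsilon f$. Assume inductively that $f^k d'^{k-1}=r_k d^{k-1}f^{k-1}$ for $k\le n$. The composite $d^nf^n:X^n\to E^{n+1}$ then kills $Im(d'^{n-1})$, since
$$d^nf^nd'^{n-1}=r_n\,d^nd^{n-1}f^{n-1}=0.$$
I would like to define $\psi:Im(d'^n)\to E^{n+1}$ by $\psi(d'^n(x))=d^nf^n(x)$, but well-definedness requires $d^nf^n$ to vanish on all of $Ker(d'^n)$, not merely on $Im(d'^{n-1})$. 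For $y\in Ker(d'^n)$, $e$-exactness of the bottom row produces $s\ne 0$ with $sy\in Im(d'^{n-1})$, hence $s\cdot d^nf^n(y)=0$; under the torsion-free hypothesis on the targets implicit in this chapter (compare Propositions \ref{aa} and \ref{ff}), this forces $d^nf^n(y)=0$, and $\psi$ is well defined on $Im(d'^n)\subseteq X^{n+1}$. Now apply $e$-injectivity of $E^{n+1}$ to the inclusion $Im(d'^n)\hookrightarrow X^{n+1}$ and $\psi$ to obtain $r_{n+1}\ne 0$ and $f^{n+1}:X^{n+1}\to E^{n+1}$ with $f^{n+1}|_{Im(d'^n)}=r_{n+1}\psi$, so that $f^{n+1}d'^n=r_{n+1}d^nf^n$ and the induction closes.

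For the uniqueness clause, given two liftings $f,g$ with their respective scalars, I would first rescale so that $\phi^n:=f^n-g^n$ satisfies $\phi^0\varepsilon'=0$, and then construct $s^n:X^n\to E^{n-1}$ inductively starting from $s^0=0$. At stage $n$, the auxiliary map $\phi^n-d^{n-1}s^n$ vanishes on $Im(d'^{n-1})$ by a direct manipulation of the chain-map identity together with the previous homotopy relation, and the same factor-through-$Im(d'^n)$ and then essentially-extend procedure yields $s^{n+1}:X^{n+1}\to E^n$ together with the required identity $r(f^n-g^n)=s^{n+1}d'^n+s\,d^{n-1}s^n$ for suitable nonzero $r,s\in R$. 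The main obstacle throughout is bookkeeping for the nonzero scalars introduced at every invocation of $e$-injectivity and of the essential containment $Im(d'^{n-1})\leq_e Ker(d'^n)$; these must be collapsed into the single pair $(r,s)$ demanded by the definition of $e$-homotopy. The only genuinely non-routine algebraic step is the well-definedness of $\psi$ and its homotopy analog, which rests on torsion-freeness of the targets to upgrade $s\cdot d^nf^n(y)=0$ to $d^nf^n(y)=0$ — precisely where essential containment would otherwise fail to imply equality.
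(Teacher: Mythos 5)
Your construction follows the same inductive scheme as the paper's own proof: $e$-injectivity of $E^0$ applied to the monic $\varepsilon'$ and to $\varepsilon f$ gives the base case, and at each stage one factors $d^nf^n$ through $X^{n+1}$ and essentially extends using $e$-injectivity of $E^{n+1}$; the homotopy is built by a parallel induction. The one point where you diverge is also the one point where you are more careful than the paper, and it is worth dwelling on. In the classical comparison theorem the induced map on $Im(d'^n)$ is automatically well defined because $Ker(d'^n)=Im(d'^{n-1})$; here $e$-exactness only gives $Im(d'^{n-1})\leqslant_e Ker(d'^n)$, so for $y\in Ker(d'^n)$ one only obtains $s\cdot d^nf^n(y)=0$ for some nonzero $s$ depending on $y$. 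The paper's proof passes over this silently (its diagram labels the source of the factored map as $Ker\,d'^n$ with horizontal arrow $d'^n$, which can only be read as the coimage, and no well-definedness check is made). Your patch --- torsion-freeness of $E^{n+1}$, so that $s\cdot d^nf^n(y)=0$ upgrades to $d^nf^n(y)=0$ --- does close the gap, but that hypothesis is \emph{not} among the assumptions of the theorem as stated. You should either add it explicitly to the statement (it is consistent with the torsion-freeness hypotheses appearing in Propositions \ref{aa} and \ref{ff}) or supply an argument that avoids it; as written, neither your argument nor the paper's proves the theorem in the stated generality.

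Two smaller points. First, your uniqueness clause is a plan rather than a proof: the claim that $\phi^n-d^{n-1}s^n$ vanishes on $Im(d'^{n-1})$ ``by a direct manipulation'' is exactly the step where all the accumulated scalars must be reconciled, and the same well-definedness issue (hence the same torsion-freeness appeal) recurs there. Second, your opening move of ``rescaling so that $\phi^0\varepsilon'=0$'' does not quite work as stated: from $f^0\varepsilon'=r\varepsilon f$ and $g^0\varepsilon'=r'\varepsilon f$ one gets $(r'f^0-rg^0)\varepsilon'=0$, which is a scalar multiple of $f^0-g^0$ only when $r=r'$; since the definition of $e$-homotopy demands a single nonzero $r$ with $r(f^n-g^n)=s^{n+1}d'^n+s\,d^{n-1}s^n$, this bookkeeping needs to be carried through explicitly rather than assumed away (the paper makes the same tacit assumption by writing the same $r$ for both chain maps).
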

\begin{proof}
	We prove the existence of $ f^n $ by induction $ n\geqslant 0 $. For the base step $ n=0 $, consider the following diagram:
		\[\begin{tikzcd}[sep=2cm]
		{} & E^0 & {} \\
		0 \ar{r} & A' \ar{u}{\varepsilon\circ f} \ar[r,swap]{r}{\varepsilon'} & X_0 \ar[dashed,swap]{lu}{f_0}
	\end{tikzcd}\]	
	 
	Since $ \varepsilon'$ is monic and $E^0$ is e-injective, there exist $f^0:X^0\to E^0$ and $0 \ne r \in R$ such that $f^0\varepsilon'=r(\varepsilon \circ f)$. For the inductive step, consider we have $f^{n-1}$ and $ f^n$ and the following diagram:
		\[ \begin{tikzcd}[arrows={-Stealth}]
		{} & E^{n-1}\rar["d^{n-1}"] & E^n\rar["d^n"] & E^{n+1}\rar["d^{n+1}"] & ...\\
		X^{n-2}\rar & X^{n-1}\rar["d'^{n-1}"]\uar["f^{n-1}"] & X^n\rar["d'^{n}"]\uar{f^{n}} &X^{n+1}\uar[dashed]{f^{n+1}}
	\end{tikzcd}	\]
	
	Since $ d^nf^n(d{'^{n-1}}X^{n-1})=r(d^nd^{n-1}f^{n-1}(X^{n-1}))=0$, $ Imd^{'n-1} \subseteq Ker d^nf^n$ and as $ Im d^{'n-1}
	\leqslant_e Ker d^n $,we have the following diagram\\
		\[\begin{tikzcd}[sep=2cm]
		{} & E^{n+1} & {} \\
		0\ar{r} & Ker d'^n \ar{u}{d^nf^n}\ar[r,swap]{r}{d'^n} & X^{n+1} \ar[dashed,swap]{lu}{f^{n+1}}
	\end{tikzcd}\]	
	in which $ d'^n$ is monic and $ E^{n+1}$ is e-injective, so there exist $r_n\in R \quad$ and $f^{n+1}:X^{n+1} \to E^{n+1}$ such that $ f^{n+1}d^{'n}=r_nd^nf^n$.  
	Now, to show the uniqueness of $f$ up to e-homotopy. If $h:X^{A'}\to E^{A'}$, is another chain map with $h^0\varepsilon'=r\varepsilon f$, we construct the terms $s^n:X^{n+1}\to E^n$ of an e-homotopy $s=s^n$ by induction on $ n\ge0 $ that we will show that $ s(h^n-f^n)=s^{n+1}d^{n'}+r'd^{n+1}s^n$ for $s,r',p$ and $q$ in  $R$. We define $s^0 : X^0\to A$ to be the zero map. Now to show that $Im (r(h^n-f^n)-r'd^{n-1}s^n)\subseteq Ker d^n$, we have $ d^n(r(h^n-f^n)-r'd^{n-1}s^n\\ =d^n(r(h^n-f^n)-d^n(r'd^{n-1}s^n)$\\
	$=rd^n(h^n-f^n)-r'd^n(d^{n-1}s^n)$\\
    $=rd^n(h^n-f^n)-r'd^n(p(h^n-f^n)-qs^{n+1}d'^n)$\\
      $=rd^n(h^n-f^n)-r'pd^n(h^n-f^n)-r'qd^n(s^{n+1}d'^n)$\\
        $=rd^n(h^n-f^n)-r'pd^n(h^n-f^n)-r'qd^n(md^{n-1}s^n)=0$, put $r'p=r$\\
   Therefore, we have the following diagram:
		\[\begin{tikzcd}[sep=2cm]
		{} & E^{n} & {} \\
		0\ar{r} & Ker d'^n \ar{u}{r(h^n-f^n)-r'd^{n-1}s^n}\ar[r,swap]{r}{d'^n} & X^{n+1} \ar[dashed,swap]{lu}{s^{n+1}}
	\end{tikzcd}\]	
	since $E^n$ is e-injective and $d'^{n}$ monic, there exist $s^{n+1}$, $s\in R$ such that $ s^{n+1}d^{n'}=s(r(h^n-f^n-r'd^{n-1}s^n))$. Therefore, $sr(h^n-f^n)=s^{n+1}d^{n'}+sr'd^{n-1}s^n$. Hence $f$ and $h$ are e-homotopic.
\end{proof}

\begin{proposition}\label{1.8}
	Let $I'^{\bullet}$ and $I''^{\bullet}$ be two e-injective resolutions of $ M'$ and $M''$ respectively. Suppose that $ 0\to M'\to M\to M''\to 0$ is an e-exact sequence. Then there exists an e-injective resolution $ I^\bullet$ such that the following diagram
	
	\[ \begin{tikzcd}[arrows={-Stealth}]
		0\rar & M'\rar["i"]\dar["\delta'"] & M\rar["p"]\dar & M''\dar["\delta''"]\rar & 0\\%
		0\rar& I'^\bullet\rar & I^\bullet\rar &	I''^\bullet\rar &0
	\end{tikzcd}	\]	is e-commutative in which the bottom row is an e-exact sequence of complexes.
	
\end{proposition}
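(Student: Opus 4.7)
The plan is to mimic the classical Horseshoe Lemma, adapted to the $e$-exact setting. At each level $n \geq 0$, define $I^n := I'^n \oplus I''^n$, equipped with the canonical injection $j^n: I'^n \hookrightarrow I^n$ and projection $p^n: I^n \twoheadrightarrow I''^n$. The short sequence $0 \to I'^n \xrightarrow{j^n} I^n \xrightarrow{p^n} I''^n \to 0$ is split exact, hence $e$-exact, and by Proposition \ref{ee} each $I^n$ is $e$-injective. This immediately furnishes the $e$-exact sequence of complexes forming the bottom row of the target diagram; what remains is to supply the augmentation $\delta: M \to I^0$ and the differentials $d^n: I^n \to I^{n+1}$ so that $I^\bullet$ becomes an $e$-injective resolution of $M$ and the augmented squares $e$-commute.

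To construct $\delta$, I would apply $e$-injectivity of $I'^0$ to the monic $i: M' \to M$ and the map $\delta': M' \to I'^0$, obtaining $\alpha: M \to I'^0$ and a nonzero $r_0 \in R$ with $\alpha i = r_0 \delta'$. Set $\delta(m) := (\alpha(m), \delta''(p(m)))$. Since $pi = 0$ (because $\mathrm{Im}(i) \leqslant_e \mathrm{Ker}(p)$ forces $\mathrm{Im}(i) \subseteq \mathrm{Ker}(p)$), the left square satisfies $\delta \circ i = r_0 (j^0 \circ \delta')$, while the right square satisfies $p^0 \circ \delta = \delta'' \circ p$; both are therefore $e$-commutative.

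I would build the differentials inductively, assuming $d^k: I^k \to I^{k+1}$ has been defined for $k < n$ in block-triangular form $d^k(x', x'') = (d'^k(x') + s^k(x''), d''^k(x''))$, compatible with both columns and satisfying $d^k d^{k-1} = 0$. The second coordinate of $d^n$ must be $d''^n$ and its restriction to $I'^n$ must be $j^{n+1} \circ d'^n$; only the connecting map $s^n: I''^n \to I'^{n+1}$ needs to be produced. For $n = 0$, the closure condition $d^0 \delta = 0$ reduces to $s^0 \delta'' p = -d'^0 \alpha$. Because $d'^0 \alpha i = r_0 d'^0 \delta' = 0$, the map $-d'^0 \alpha$ vanishes on $\mathrm{Im}(i)$ and, via the essential inclusion $\mathrm{Im}(i) \leqslant_e \mathrm{Ker}(p)$, descends to a map $\bar\beta: M'' \to I'^1$ satisfying $\bar\beta \circ p = -d'^0 \alpha$ up to a nonzero scalar. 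Applying $e$-injectivity of $I'^1$ to the monic $\delta''$ and the map $\bar\beta$ yields $s^0: I''^0 \to I'^1$ with $s^0 \delta'' = r'_0 \bar\beta$ for some nonzero $r'_0 \in R$. The inductive step for $n \geq 1$ is analogous: the obstruction to $d^{n+1} d^n = 0$ is a map annihilating $\mathrm{Im}(d''^{n-1})$, hence essentially factoring through $I''^n / \mathrm{Im}(d''^{n-1})$, and the required extension is again supplied by $e$-injectivity of $I'^{n+1}$. Once all $d^n$ are in hand, the $3 \times 3$-style lemmas proved earlier in this section, applied to the diagram whose rows are the split $e$-exact sequences $0 \to I'^n \to I^n \to I''^n \to 0$ and whose columns are the $e$-exact complexes $I'^\bullet$ and $I''^\bullet$ augmented by $M'$, $M$, $M''$, force $e$-exactness of the middle column $0 \to M \to I^0 \to I^1 \to \cdots$.

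The principal obstacle is the careful bookkeeping of the nonzero ring elements introduced at each invocation of $e$-injectivity during the recursive construction of the maps $s^n$. Where classical injectivity guarantees strict equalities $d^{n+1} d^n = 0$, the $e$-setting only gives identities up to scalar factors, and arranging these scalars so that the resulting $d^n$ satisfy genuine (not merely $e$-)closure relations is the delicate point; it is here that torsion-freeness hypotheses, used throughout the paper, can be expected to play a role. A secondary subtlety is verifying that the augmentation $\delta$ is an honest monomorphism, which relies on $\delta''$ being mono, on essentiality of $\mathrm{Im}(i)$ in $\mathrm{Ker}(p)$, and once again on a torsion-freeness condition allowing one to pass from $r \cdot x = 0$ to $x = 0$.
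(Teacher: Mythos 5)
Your proposal is correct and follows essentially the same route as the paper: both form $I^n = I'^n \oplus I''^n$ (e-injective by Proposition \ref{ee}), take the split rows as the bottom e-exact sequence of complexes, and build a block-triangular differential whose off-diagonal entries $I''^n \to I'^{n+1}$ are produced by e-injectivity of $I'^{n+1}$ --- the only difference being that the paper obtains all of these entries at once as a chain map supplied by the Comparison Theorem (Theorem \ref{bb}), applied to the e-exact complex $0\to M'\to M\to I''^0\to I''^1\to\cdots$ over the resolution $I'^\bullet$, rather than by redoing that induction level by level. The scalar-bookkeeping issue you flag as the principal obstacle is genuine, but the paper's own proof treats it no more carefully (it simply normalizes the various nonzero scalars to $1$ at the end of each computation), so your account matches the paper's level of rigor.
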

\begin{proof}
	Consider the following diagram in which the rows are e-exact
		\[ \begin{tikzcd}[arrows={-Stealth}]
		0\rar & M'\rar["i"]\dar["id_m"] & M\rar["S''op"]\dar[dashed,"f^0"] & I''^0\dar[dashed,"f^1"]\rar["S''^0"] & I''^1\rar["s''^1"]\dar[dashed,"f^2"] & I''^2\rar\dar[dashed,"f^3"] & ...\\%
		0\rar& M'\rar["S'^{-1}"] & I'^0\rar["S'^0"] & I'^1\rar["S'^1"] &	I'^2\rar["S'^2"] & I'^3\rar & ...
	\end{tikzcd}	\]
	By the comparison theorem for e-injectives (Theorem \ref{bb}), there exist maps (dashed arrows) that make all squares e-commute. Now, we define $ I^n=I'{^n}\bigoplus I''{^n}, \delta^{-1}:M \to I^0 $ by $\delta^{-1}(m)=(-f^0(m),\delta''^{-1}\circ p(m))$ and $\delta^n:I^n\to I^{n+1} $ by $\delta^n(a,b)=(\delta'^n(a)+(-1)^nf^{n+1}(b),\delta''^n(b)) $,
			\[ \begin{tikzcd}[arrows={-Stealth}]
		0\rar & M'\rar["i"]\dar["\delta'^{-1}"] & M\rar["p"]\dar[dashed,"\delta^{-1}"] & M''\dar["\delta''^{-1}"]\rar &0\\%
			0\rar & I'^0\rar["i^0"]\dar["\delta'^0"] & I^0\rar["p^0"]\dar[dashed,"\delta^0"] & I''\dar["\delta''^0"]\rar &0\\%
		0\rar& I'^1\rar["i^1"] & I^1\rar["f^1"] & I''^1\rar &0
	\end{tikzcd}	\]

 since $Ker\delta''^{-1}=0$ and $Ker\delta'^{-1}=0$, $Ker\delta^{-1}=0$. Thus $\delta^{-1}$ is monic and from Proposition \ref{bb}, we get $I^n$ is e-injective for each $n\geqslant 0$.
  To prove that $Im\delta^{-1}\leqslant_e Ker\delta^0$ we must show first that $Im\delta^{-1}\subseteq Ker\delta ^0$. Thus $\delta ^0(\delta^{-1}(m))=\delta^0(-f^0(m),\delta''^{-1}\circ p(m))=\delta'(-f^0(m)+(-1)^0f^1(\delta''^{-1}\circ p(m)), \delta''^0(\delta''^{-1}\circ p(m)))=0$,
   which implies that $Im\delta^{-1}\subseteq Ker\delta^0$. Now, for essentiality, let $0\ne(a,b)\in Ker\delta^0$. Then $0=\delta^0(a,b)=(\delta'^0(a)+(-1)^0f^1(b),\delta''^0(b))$, so $\delta'^0(a)+f^1(b)=0$ and $\delta''^0(b)=0$. Since $Im\delta''^{-1}\leqslant_e Ker\delta''^0$, there exists $0 \ne r\in R$ such that $\delta''^{-1}(m'')=rb$ for some $m''$ in $M''$ and $\delta''^{-1}(p(m))=rb$ for some $m\in M$. So $f^1(\delta''^{-1}(p(m)))=f^1(rb)=rf^1(b)=-r\delta'^0(a)$ and by e-commutity there exists $0\ne q\in R$ such that $-r\delta'^0(a)=q\delta'^0f^0(m)$ and this implies that $r\delta'^0(a)+q\delta'^0f^0(m)=0=\delta'^0(ra+qf^0(m))$. Thus we get $ra+qf^0(m)\in Ker\delta'^0$,
  and since $Im\delta'^{-1}\leqslant_e Ker\delta'^0$, there exists $0\ne t\in R$ such that $\delta'^{-1}(m')=tra+tqf^0(m)$ for some $m'\in M'$. Now, $\delta^{-1}(tqm-i(m'))=(-f^0(tqm-i(m')),\delta''^{-1}\circ p(tqm-i(m'))=(-f^0(tqm)+f^0(i(m'))),\delta''^{-1}p(tqm)-\delta''^{-1}p(i(m')))=tra-\delta'^{-1}(m')+p\delta'^{-1}(m'),tqrb)=s(a,b)$, put $q=p=1$
    which proves that $Im\delta^{-1}\leqslant_e Ker\delta^0$ for $n\geqslant 0$ and $\delta^{n+1}(\delta^n(a,b))=\delta^{n+1}(\delta'^{n}(a)+(-1)^nf^{n+1}(b)),\delta''^n(b))\\=\delta'^{n+1}(\delta'^n(a)+(-1)^nf^{n+1}(b)+(-1)^{n+1}f^{n+2}(\delta''^n(b)),\delta''^{n+1}(\delta''^n(b)))=0$.
    This proves that $Im\delta^n\subseteq Ker\delta^{n+1}$. Now let $(a,b)\in Ker\delta^{n+1}$. Then $\delta''^{n+1}(b)=0$ and $\delta'^{n+1}(a)+(-1)^{n+1}f^{n+2}(b)=0$. It follows that $\delta''^n(c)=rb$, $0\ne r\in R$ for some $c\in I''^{n}$. Then 
    $(-1)^ny\delta'^{n+1}f^{n+1}(c)=(-1)^nf^{n+2}(\delta''^n(c))=r\delta'^{n+1}(a)$ 
    so that $\delta'^{n+1}(ra-(-1)^{n+1}yf^{n+1}(c))=0$ and $ra-(-1)^n yf^{n+1}(c)\in Ker\delta'^{n+1}$. Since $Im\delta'^n\leqslant_e Ker\delta'^{n+1}$,
     there exists $0\ne q\in R$ such that $\delta'^n(d)=q(ra-(-1)^n yf^{n+1}(c))$ for some $d\in I'^n$. Thus $\delta^n(d,c)=(\delta'^n(d)+(-1)^nf^{n+1}(c),\delta''^n(c))=r(a,b)$, put $q=y=1$, which proves that $Im\delta^n\leqslant_e Ker\delta^{n+1}$.  
\end{proof}

%%%%%%%%%%%
%%%%%%%%%%%%%%%%%%%%%

\section{The cohomology regarding to $e$-exact sequences}
In this section all rings are Noetherian domain and all modules are unitary $R$-modules. We want to describe the right derived functors to the additive covariant functor (torsion functor) $\Gamma_a$ of local cohomology with e-injective resolutions and then we present new forms of some theorems of cohomology with $e$-exact sequences. Let $a$ be an ideal of $R$. The following is a useful result that shows the functor $\Gamma_a(\:)$ is an $e$-exact under suitable condition. 
\begin{lemma}\label{1.13}
	Let $0\to L\stackrel f\to M\stackrel g\to N\to 0$ be an e-exact sequence of $R$-modules and $R$-homomorphisms. Then $0\to\Gamma _a(L)\stackrel{\Gamma_a(f)}\to \Gamma_a(M)\stackrel{\Gamma_a(g)}\to \Gamma_a(N)$ is an e-exact sequence. Furthermore, if $N$ is a torsion-free module, then the functor $\Gamma_a(\:)$ is an $e$-exact.
\end{lemma}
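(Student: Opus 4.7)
The plan is to verify the three defining conditions of $e$-exactness for the sequence $0\to\Gamma_a(L)\stackrel{\Gamma_a(f)}\to\Gamma_a(M)\stackrel{\Gamma_a(g)}\to\Gamma_a(N)$: monicity of $\Gamma_a(f)$, the essential containment $Im(\Gamma_a(f))\leqslant_e Ker(\Gamma_a(g))$, and (for the second assertion) the essential containment $Im(\Gamma_a(g))\leqslant_e\Gamma_a(N)$.

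The first item is immediate, since $\Gamma_a(f)$ is simply the restriction of the monomorphism $f$ to the submodule $\Gamma_a(L)\subseteq L$, and the restriction of a monomorphism is a monomorphism. For the inclusion $Im(\Gamma_a(f))\subseteq Ker(\Gamma_a(g))$, the $e$-exactness of the original sequence at $M$ in particular yields $gf=0$, so by functoriality $\Gamma_a(g)\circ\Gamma_a(f)=\Gamma_a(gf)=0$.

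The central step is the essentiality $Im(\Gamma_a(f))\leqslant_e Ker(\Gamma_a(g))$. I would take an arbitrary nonzero $m\in Ker(\Gamma_a(g))$, so that $m\in Ker(g)$ and $a^n m=0$ for some $n\geq 1$. The cyclic submodule $Rm$ is a nonzero submodule of $Ker(g)$, so by the hypothesis $Im(f)\leqslant_e Ker(g)$ there exist $r\in R$ and $l\in L$ with $f(l)=rm\neq 0$. From $a^n m=0$ I deduce $f(a^n l)=a^n f(l)=a^n(rm)=0$, and since $f$ is monic this forces $a^n l=0$, i.e.\ $l\in\Gamma_a(L)$. Hence $0\neq rm=\Gamma_a(f)(l)\in Im(\Gamma_a(f))\cap Rm$, and since every nonzero submodule of $Ker(\Gamma_a(g))$ contains some such cyclic $Rm$, the essentiality is established. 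The monicity of $f$ is used crucially here to pull the torsion condition back to $L$.

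For the \emph{furthermore} clause, I would invoke the standing hypothesis that $R$ is a Noetherian domain. If $a=0$ then $\Gamma_a$ is the identity functor and the claim is trivial, so assume $a$ contains a nonzero element $r$. If $x\in\Gamma_a(N)$ then $a^k x=0$ for some $k\geq 1$, so $r^k x=0$; since $r^k\neq 0$ and $N$ is torsion-free, $x=0$. Therefore $\Gamma_a(N)=0$, which gives $Im(\Gamma_a(g))=0\leqslant_e\Gamma_a(N)$ for free, and combined with the first three steps this makes $\Gamma_a(-)$ an $e$-exact functor on torsion-free modules. The main obstacle is the essentiality argument; everything else is bookkeeping, and the delicate point is that one needs both the essentiality of $Im(f)$ in $Ker(g)$ and the injectivity of $f$ simultaneously in order to produce an element of $\Gamma_a(L)$ witnessing the intersection.
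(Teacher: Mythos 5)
Your proof is correct, and the heart of it---the essentiality $Im(\Gamma_a(f))\leqslant_e Ker(\Gamma_a(g))$, obtained by intersecting $Rm$ with $Im(f)$ inside $Ker(g)$ and then pulling the torsion condition back through the monomorphism $f$---is exactly the argument the paper gives (the paper leaves the use of injectivity of $f$ in the step $f(a^nl)=0\Rightarrow a^nl=0$ implicit; you rightly make it explicit). Where you genuinely diverge is the \emph{furthermore} clause. The paper attempts a direct essentiality argument for $Im(\Gamma_a(g))\leqslant_e \Gamma_a(N)$: it takes $0\ne x\in\Gamma_a(N)$, produces $y=g(m)=rx$ with $m\in\Gamma_a(M)$, and concludes $rx\ne 0$ from torsion-freeness; as written this is muddled, and in particular it never justifies why the preimage $m$ of $rx$ can be chosen inside $\Gamma_a(M)$. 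You instead observe that under the section's standing hypotheses ($R$ a Noetherian domain, and splitting off the trivial case $a=0$) a torsion-free module $N$ satisfies $\Gamma_a(N)=0$, so the condition at the third spot holds vacuously. This is cleaner and avoids the gap, at the mild cost of making the conclusion somewhat degenerate: it reveals that for torsion-free $N$ the ``$e$-exactness of $\Gamma_a$'' asserted here reduces to the statement that $Im(\Gamma_a(f))$ is essential in $\Gamma_a(M)=Ker(\Gamma_a(g))$, which your main argument already delivers. It would be worth flagging, as you implicitly do, that the claim does depend on the domain hypothesis and on $a\ne 0$ (or on the trivial identification $\Gamma_0=\mathrm{id}$), since the lemma's statement does not repeat these assumptions.
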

\begin{proof}
	It is clear that $ker(\Gamma_a(f))=0$. To show that $Im (\Gamma_a(f))\leqslant_e Ker (\Gamma_a(g))$. Let $0\ne y\in Ker(\Gamma_a(g))$. Then $y\in \Gamma_a(M)$ and $g(y)=0$, that is there exist $n\in N$ such that $a^ny=0$. Now, since $Im f\leqslant_e Kerg$, there exists $0\ne r\in R$ such that $f(l)=ry$, for some $l\in L$. Since $f(a^nl)= a^nf(l)= a^n(ry)=0, a^nl=0$ and $l\in \Gamma_a(L)$. Now, to show that $Im(\Gamma_a(g))\leqslant_e \Gamma_a(N)$. Let $y\in \Gamma_a(g)\cap Rx$, for $0\ne x\in \Gamma_a(N)$. Then $y=g(m)=rx$ for $m\in \Gamma_a(M)$ and $r\in R$. By hypothesis, $rx\ne 0$ and so $y\ne 0$ which guarantees the $e$-exactness of the sequence $0\to\Gamma _a(L)\stackrel{\Gamma_a(f)}\to \Gamma_a(M)\stackrel{\Gamma_a(g)}\to \Gamma_a(N)\to 0$.   
	
\end{proof}
\begin{definition}
The right e-derived functors $R^nT$ of a covariant functor $T$ are defined on an $R$-module $A$ by $(R^nT)(A)=H^n(TE^A)=\frac{ker(Td^n)}{Im(Td^{n-1})}$, where $E:0\to A \to E^0\stackrel {d^0} \to E^1 \stackrel {d^1}\to \dots$ is an e-injective resolution of $A$ and $E^A$ is its deleted $e$-injective resolution.
\end{definition}
\quad Define the $n_{th}$ $e$-cohomolgy module $_eH^n_a(M)$ of $M$ with respect to an ideal $\textbf{a}$ as the $n_{th}$ right e-derived functor of the torsion functors $\Gamma_a(\: )$, that is $_eH^n_a(M)=R^n\Gamma_a(M)=\frac{ker(\Gamma_a(d^n))}{Im (\Gamma_a(d^{n-1}))}$. To calculate $_eH^n_a(M)$ with e-injective resolution $0\to M\stackrel{\alpha}\to I^0\stackrel{d^0}\to I^1\to \dots \to I^n\stackrel{d^n}\to I^{n+1}\to \dots$ for any $R$-module $M$ we do the following:  Apply the functor $\Gamma_a$ to the deleted complex of $M$ $I^E:0\stackrel{d^{-1}} \to I^0\stackrel{d^0} \to I^1 \to \dots \to I^n\stackrel{d^n}\to\dots $ and obtain $0\to \Gamma_a(I^0)\stackrel{\Gamma_a (d^0)}\to \Gamma_a(I^1)\to \dots\to\Gamma_a(I^n)\to \dots$ then take the $n_{th}$ $e$-cohomology module of this complex, the result is $_eH^n_a(M)=\frac{Ker(\Gamma_a(d^n))}{Im(\Gamma_a(d^{n-1}))}=(R^nT)(M)=R^n(\Gamma_a(M))=H^n(TE^M)$.
\begin{theorem}
	The right e-derived functors for $\Gamma_a$ are additive functors for every integer $n$.
\end{theorem}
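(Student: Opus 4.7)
The plan is to verify the two defining properties of additivity: for all parallel $R$-morphisms $f, g \colon M \to M'$, the identity $R^n\Gamma_a(f+g) = R^n\Gamma_a(f) + R^n\Gamma_a(g)$ holds, and $R^n\Gamma_a(0) = 0$. The argument factors through three additive operations performed in sequence: the torsion functor $\Gamma_a$ itself, the passage from module maps to chain lifts guaranteed by Theorem \ref{bb}, and the $n$th cohomology.

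First I would check that $\Gamma_a$ is additive as a functor on $R$-modules. Since $\Gamma_a(M)$ is the submodule of elements annihilated by some power of $a$ and every $R$-morphism $h \colon M \to M'$ restricts to $\Gamma_a(h) \colon \Gamma_a(M) \to \Gamma_a(M')$ by $x \mapsto h(x)$, one immediately obtains $\Gamma_a(f+g)(x) = f(x)+g(x) = \Gamma_a(f)(x) + \Gamma_a(g)(x)$ for every $x \in \Gamma_a(M)$, and $\Gamma_a(0) = 0$.

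Next, fix $e$-injective resolutions $E^M$ and $E^{M'}$ of $M$ and $M'$. Invoking the Comparison Theorem for $e$-injectives (Theorem \ref{bb}) applied to $f$ and $g$ separately, I produce chain lifts $\tilde{f} = (\tilde{f}^n)$ and $\tilde{g} = (\tilde{g}^n)$ making the relevant diagrams $e$-commute. A suitable $R$-linear combination of $\tilde{f}$ and $\tilde{g}$, adjusted by the nonzero scalars that appear in the $e$-commutativity of each square, serves as a chain lift of $f+g$; by the uniqueness clause of Theorem \ref{bb} any other lift of $f+g$ is $e$-homotopic to it. Applying the additive functor $\Gamma_a$ degree by degree and then passing to the $n$th cohomology of the resulting complexes, the induced map on $H^n$ equals the sum of the maps induced by $\tilde{f}$ and $\tilde{g}$, modulo $e$-homotopy. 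This delivers $R^n\Gamma_a(f+g) = R^n\Gamma_a(f) + R^n\Gamma_a(g)$, and the case $g=0$ with the trivial lift $\tilde{0} = 0$ gives $R^n\Gamma_a(0) = 0$.

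The main obstacle will be the bookkeeping of the nonzero scalars from $R$ that intrude at every $e$-commutative square and every $e$-homotopy: unlike the classical setting, the naive sum $\tilde{f} + \tilde{g}$ need not itself satisfy the $e$-chain condition for $f+g$ without first rescaling each component by the scalars coming from Theorem \ref{bb}. One must also confirm that $e$-homotopic chain maps induce the same morphism, rather than merely a nonzero scalar multiple, on the cohomology of the $\Gamma_a$-applied complex. The working hypothesis that $R$ is a Noetherian domain, in force throughout Section 3, is essential here since it permits cancellation of these nonzero scalars on the torsion-free pieces that arise in the comparison.
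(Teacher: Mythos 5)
Your overall route is the same as the paper's: restrict $f$ to $\Gamma_a$, lift to a chain map between the $e$-injective resolutions via Theorem \ref{bb}, apply $\Gamma_a$ degreewise, and pass to $H^n$, concluding additivity from the additivity of $\Gamma_a$ and of $H^n$. The paper's proof does exactly this and, notably, does \emph{not} address either of the two obstacles you flag; it simply writes $H^n(\Gamma_a(f+g))=H^n(\Gamma_a(f)+\Gamma_a(g))$ as if the module maps themselves, rather than chain lifts of them, were being fed into $H^n$. So in identifying the scalar bookkeeping and the $e$-homotopy well-definedness as the real content of the statement, you have been more careful than the source.

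However, your proposal does not actually close those gaps, and they are genuine. First, the lifts produced by Theorem \ref{bb} only satisfy $\tilde f^{n+1}d'^n = r_n d^n \tilde f^n$ for varying nonzero $r_n$, so the combination of $\tilde f$ and $\tilde g$ that lifts $f+g$ carries extra scalars $t,r$; on cohomology this yields $t\,R^n\Gamma_a(f)+r\,R^n\Gamma_a(g)$ rather than the sum itself, i.e.\ additivity only up to nonzero multipliers. Second, the $e$-homotopy relation $r(h^n-f^n)=s^{n+1}d'^n+s\,d^{n-1}s^n$ gives $r(h^n_*-f^n_*)=0$ on cohomology, which forces $h^n_*=f^n_*$ only when $H^n(\Gamma_a E^{M'})$ is $r$-torsion-free. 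Your appeal to cancellation "on the torsion-free pieces" does not apply here: the $e$-cohomology modules are in general \emph{not} torsion-free (Example \ref{ex} exhibits $\mathbb{Z}/8\mathbb{Z}$ as one), so the induced map need not even be independent of the chosen lift. Either the theorem must be weakened to additivity up to nonzero scalars (with well-definedness likewise only up to torsion), or additional torsion-freeness hypotheses must be imposed; as stated, neither your argument nor the paper's establishes strict additivity.
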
 
\begin{proof}
Let $f:M \to M^{'}$ be a morphism. Then by Theorem \ref{bb}, there is a chain map $\breve{f}:E^M\to E^{M^{'}}$ over $f$, where $E^M$ and $E^{M^{'}}$ are deleted e-injective resolutions for $M$ and $M{'}$ respectively. Then  $\Gamma_a\breve{f}:\Gamma_aE^M\to \Gamma_aE^{M^{'}}$ is also a chain map, and so there is a well-defined map $_eH^n_a(f)=(R^n\Gamma_a)f:H^n(\Gamma_aE^M)\to H^n(\Gamma_aE^{M^{'}})$, defined by $_eH^n_a(f)=(R^n\Gamma_a)f=H^n(\Gamma_a\breve{f})=(\Gamma_a\breve{f})$ and $_eH^n_a=R^n\Gamma_a$ is also an additive covariant functor, because $_eH^n_a(f+g)=(R^n\Gamma_a)(f+g)=H^n(\Gamma_a(f+g))=H^n(\Gamma_a(f)+\Gamma_a(g))=H^n(\Gamma_a(f))+H^n(\Gamma_a(g))=  _eH^n_a(f)+  _eH^n_a(g)$. Therefore the right e-derived functors are additive functors for every integer n.
\end{proof}

%%%%%%%%%%5%
%%%%%%%%%%%%
%%%%%%%%%

\begin{proposition}\label{ne}
	Let $A$ be any $R$- module. Then $_eH^n_a(A)=(R^n\Gamma_a)A=0$, for all negative integers $n$.
\end{proposition}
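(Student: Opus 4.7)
The plan is to unwind the definition of $_eH^n_a(A)$ and observe that in negative degrees everything in sight is zero, so the quotient defining cohomology is automatically trivial.

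First I would recall the setup: by definition, an $e$-injective resolution of $A$ has the form
\[ 0 \to A \stackrel{\eta}\to E^0 \stackrel{d^0}\to E^1 \stackrel{d^1}\to \cdots \to E^n \stackrel{d^n}\to E^{n+1} \to \cdots, \]
so the deleted $e$-injective resolution $E^A$ consists of the complex $E^n$ for $n\geqslant 0$ together with $E^n = 0$ and $d^n = 0$ for $n < 0$ (with the convention that terms outside the displayed range are zero). This is the same convention used implicitly when one writes the cohomology formula $(R^nT)(A) = H^n(TE^A)$.

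Next I would apply the additive functor $\Gamma_a$ termwise to $E^A$. Since $\Gamma_a$ is an additive functor, it sends the zero module to the zero module, so $\Gamma_a(E^n) = 0$ for every $n < 0$. Consequently the induced differential $\Gamma_a(d^n)$ in negative degrees is the zero map between zero modules, which yields $\ker(\Gamma_a(d^n)) = 0$ and $\operatorname{Im}(\Gamma_a(d^{n-1})) = 0$ for all $n < 0$.

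Finally I would conclude directly from the formula given in the definition of the right $e$-derived functor:
\[ {}_eH^n_a(A) = (R^n\Gamma_a)(A) = \frac{\ker(\Gamma_a(d^n))}{\operatorname{Im}(\Gamma_a(d^{n-1}))} = \frac{0}{0} = 0 \]
for every $n < 0$. There is no real obstacle here; the only thing to be careful about is to make explicit the convention that an $e$-injective resolution is zero in negative degrees, which is a purely notational point rather than a substantive step.
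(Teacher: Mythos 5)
Your proof is correct and takes essentially the same approach as the paper's: both observe that the deleted $e$-injective resolution has zero terms in negative degrees, so applying $\Gamma_a$ yields zero modules there and the cohomology quotient is trivially zero. Your version is slightly more explicit about the kernel and image being zero, but there is no substantive difference.
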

\begin{proof}
	Suppose that $E:0\to A\to E^0 \stackrel{d^0} \to E^1\to \dots$ be an e-injective resolution for $A$. Then the deleted complex of $A$ is $E^A: 0\to E^0\stackrel{d^0}\to E^1\to \dots$. After applying $\Gamma_a$ on the deleted complex, we get $\Gamma_aE^n=0$ for all negative integers n, because the $n_{th}$ term of $E^A$ is zero when $n$ is negative. Hence $_eH^n_a(A)=R^n \Gamma_a(A)=0$ for all negative integers n.
\end{proof}
\begin{corollary}\label{ein}
	Let $E$ be an e-injective $R$-module. Then $_eH^n_a(E)=(R^n \Gamma_a)(E)={0}$, for all $n\geq1$.
\end{corollary}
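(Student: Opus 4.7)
The plan is to exhibit a particularly simple e-injective resolution of $E$ and read off the cohomology directly. Since $E$ is itself assumed e-injective, I would take the resolution
\[
0 \longrightarrow E \xrightarrow{\mathrm{id}_E} E \longrightarrow 0 \longrightarrow 0 \longrightarrow \cdots,
\]
with $E^0 = E$ and $E^i = 0$ for all $i \geq 1$. Exactness (and hence e-exactness) at each position is immediate: the identity is monic, the differential $E \to 0$ is epic with kernel equal to $E = \mathrm{Im}(\mathrm{id}_E)$, and all higher positions are identically zero. Every module appearing is e-injective: $E$ by hypothesis, and the zero module trivially, since any map into $0$ is the zero map, so one can complete the defining e-injectivity diagram by the zero extension together with any chosen nonzero $r \in R$.

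Next, I would form the associated deleted resolution $E^E : 0 \to E \to 0 \to 0 \to \cdots$ (with $E$ placed in degree $0$) and apply the torsion functor $\Gamma_a$, obtaining the complex
\[
0 \longrightarrow \Gamma_a(E) \longrightarrow 0 \longrightarrow 0 \longrightarrow \cdots .
\]
Since the only possibly nonzero arrow occurs before degree $1$, for every $n \geq 1$ both $\ker(\Gamma_a(d^n))$ and $\mathrm{Im}(\Gamma_a(d^{n-1}))$ are zero, and so
\[
{}_eH^n_a(E) \;=\; \frac{\ker(\Gamma_a(d^n))}{\mathrm{Im}(\Gamma_a(d^{n-1}))} \;=\; 0,
\]
which is the claim.

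The one subtle point, deserving a remark rather than further computation, is the independence of $R^n\Gamma_a$ from the chosen e-injective resolution: without this, the calculation above only exhibits zero for the particular resolution I used. This independence is precisely what the Comparison Theorem for e-injectives (Theorem \ref{bb}) is designed to deliver — any two e-injective resolutions of $E$ are linked by chain maps that are unique up to e-homotopy, and those maps induce well-defined isomorphisms on the cohomology of the $\Gamma_a$-complex. Granting this foundational fact, which underwrites the very definition of ${}_eH^n_a$, the trivial resolution used above computes the e-cohomology of $E$ and the corollary follows.
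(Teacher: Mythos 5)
Your proof is correct and follows essentially the same route as the paper: take the trivial e-injective resolution $0 \to E \xrightarrow{1_E} E \to 0$, observe that the deleted complex vanishes in all degrees $n \geq 1$, and conclude that applying $\Gamma_a$ yields zero cohomology there. Your added remark on independence of the chosen resolution (via the Comparison Theorem) is a point the paper leaves implicit, but it does not change the argument.
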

\begin{proof}
	Since $E$ is an e-injective module, the e-injective resolution of $E$ is $0\to E\stackrel{1_E}\to E\to 0$. The corresponding deleted e-injective resolution $E^E$ is the complex $0\to E\to 0$. Hence, the $n_{th}$ term of $\Gamma_a (E^E)$ is  ${0}$ for all $n\geq1$ and so $_eH^n_a(E)=(R^n \Gamma_a)(E)=H^n(\Gamma_a E^E)={0}$ for all $n\geq1$.
\end{proof}
\begin{theorem}\label{3.9}
	Let $0\to L\stackrel{f} \to M \stackrel {g} \to N \to 0$ be an e-exact sequence of R-modules and R-homomorphisms and $N$ torsion-free. Then, for each $i\in N_0$, there are a connectig homomorphisms ${_eH^i_a(N)}\stackrel{\sigma}\to {_eH^{i+1}_a(L)}$ and these connecting homomorphisms make the resulting long sequence $0\to {_eH^0_a(L)}\stackrel{_eH^0_a(f)}\to {_eH^0_a(M)}\stackrel{_eH^0_a(g)}\to {_eH^0_a(N)}\to {_eH^1_a(L)}\to \dots \to {_eH^i_a(L)}\stackrel{_eH^i_a(f)}\to {_eH^i_a(M)}\to {_eH^i_a(N)}\stackrel{\sigma^*}\to {_eH^{i+1}_a(L)}\to \dots$ $e$-exact.
\end{theorem}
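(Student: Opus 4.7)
The plan is to mimic the classical construction of the long exact sequence of right derived functors, but carried out in the essential setting. First, I would apply Proposition \ref{1.8} (the horseshoe lemma for $e$-injective resolutions) to the given short $e$-exact sequence, obtaining $e$-injective resolutions $I_L^\bullet$, $I_M^\bullet$, $I_N^\bullet$ of $L$, $M$, $N$ respectively, together with an $e$-commutative diagram whose vertical slices
\[
0 \to I_L^n \to I_M^n \to I_N^n \to 0
\]
are $e$-exact for every $n \ge 0$. Since $R$ is a Noetherian domain and $N$ is torsion-free, I would further arrange each quotient term $I_N^n$ to be torsion-free, e.g.\ by starting from a torsion-free $e$-injective envelope of $N$ and propagating torsion-freeness to its $e$-cosyzygies.

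Second, I would apply the functor $\Gamma_a$ termwise. By Lemma \ref{1.13}, the torsion-freeness of $I_N^n$ guarantees that
\[
0 \to \Gamma_a(I_L^n) \to \Gamma_a(I_M^n) \to \Gamma_a(I_N^n) \to 0
\]
remains $e$-exact for each $n$, producing a short $e$-exact sequence of complexes whose $i$-th cohomology modules are, by definition, $_eH^i_a(L)$, $_eH^i_a(M)$, and $_eH^i_a(N)$.

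Third, construct the connecting homomorphism $\sigma \colon {_eH^i_a(N)} \to {_eH^{i+1}_a(L)}$ by an $e$-version of the snake-lemma chase. Given a class $[z] \in {_eH^i_a(N)}$ with cocycle $z \in \ker(\Gamma_a(d_N^i))$, essentiality of $\operatorname{Im}(\Gamma_a(g))$ in $\Gamma_a(I_N^i)$ furnishes $0 \ne r \in R$ and $y \in \Gamma_a(I_M^i)$ with $g(y) = rz$. Then $g(d_M^i(y)) = d_N^i(rz) = 0$, so $d_M^i(y)$ lies in $\ker(\Gamma_a(g))$; essentiality of $\operatorname{Im}(\Gamma_a(f))$ in $\ker(\Gamma_a(g))$ produces $0 \ne s \in R$ and $x \in \Gamma_a(I_L^{i+1})$ with $f(x) = s\,d_M^i(y)$. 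Since $f$ is monic and $f(d_L^{i+1}(x)) = d_M^{i+1}(f(x)) = 0$, the element $x$ is a cocycle; set $\sigma([z]) = [x]$. Well-definedness of $\sigma$ up to the inevitable scalar factors, and $e$-exactness at each of the three node types, are then verified by the usual diagram-chase pattern in which every classical equality $\operatorname{Im} = \ker$ is replaced by an essential containment $\leqslant_e$ and every preimage is only asserted to exist after multiplication by some nonzero scalar in $R$.

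The main obstacle will be the systematic book-keeping of these scalars throughout the chase and the verification that $\operatorname{Im}\sigma \leqslant_e \ker({_eH^{i+1}_a(f)})$, together with the analogous essential containments at the other nodes, rather than mere inclusions. Because every lift costs a nonzero scalar factor, one must argue that the cumulative product of scalars never annihilates a nonzero class; here the Noetherian-domain hypothesis and the propagation of torsion-freeness from $N$ through the resolution $I_N^\bullet$ are indispensable, as they ensure that nonzero elements of $R$ act injectively on the relevant modules, so that the essential relations survive passage to cohomology.
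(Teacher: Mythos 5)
Your proposal is correct in outline and follows the same basic route the paper intends: produce a short $e$-exact sequence of complexes from $e$-injective resolutions, apply $\Gamma_a$ termwise, and extract connecting homomorphisms by a snake-lemma chase. The difference is in how much is done by hand. The paper's proof is very terse: it applies Lemma \ref{1.13} to the module-level sequence $0\to L\to M\to N\to 0$ and then simply cites Theorems 3.1 and 3.2 of \cite{ZA} for the connecting homomorphism and the long $e$-exact sequence, leaving the passage to resolutions (via Proposition \ref{1.8}) and the termwise $e$-exactness after applying $\Gamma_a$ implicit. You make both steps explicit and carry out the chase yourself, which is more self-contained but forces you to confront the one point the paper glosses over: Lemma \ref{1.13} only yields $e$-exactness of $0\to\Gamma_a(I_L^n)\to\Gamma_a(I_M^n)\to\Gamma_a(I_N^n)\to 0$ when the third term $I_N^n$ is torsion-free. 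Your suggested fix, ``propagating torsion-freeness to the $e$-cosyzygies,'' is not justified as stated: over a domain the cokernel of $N\hookrightarrow E(N)$ is typically torsion (e.g.\ $\mathbb{Q}/\mathbb{Z}$ for $N=\mathbb{Z}$), so torsion-freeness does not pass to cosyzygies. The correct observation is that in the $e$-exact setting one does not need cosyzygies at all: since $N\leqslant_e E(N)$ and $E(N)$ is torsion-free whenever $N$ is, the complex $0\to N\to E(N)\to 0\to\cdots$ is already a torsion-free $e$-injective resolution of $N$, which can be fed into Proposition \ref{1.8}. With that repair, your construction of $\sigma$ and the attendant scalar bookkeeping reproduce what \cite[Theorems 3.1--3.2]{ZA} supply, at the cost (shared with the paper) that $\sigma$ is only canonically defined up to multiplication by nonzero elements of $R$.
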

\begin{proof}
	By applying $\Gamma_a$ to an e-exact sequence $0\to L\stackrel{f} \to M \stackrel {g} \to N \to 0$  we obtain an $e$-exact sequence $0\to\Gamma_a(L) \stackrel{\Gamma_a(f)} \to \Gamma_a(M) \stackrel {\Gamma_a(g)} \to \Gamma_a(N)\to 0$ by Lemma \ref{1.13} and by \cite[Theorem 3.1]{ZA} there is a connecting homomorphism $\sigma_n:H^n(\Gamma_a(N))\to H^{n+1}(\Gamma_a(L))$ and by \cite[Theorem 3.2]{ZA} there is a long $e$-exact sequence of $R$-modules and $R$-morphisms $0\to {_eH^0_a(L)}\stackrel{_eH^0_a(f)}\to {_eH^0_a(M)}\stackrel{_eH^0_a(g)}\to {_eH^0_a(N)}\to {_eH^1_a(L)}\to \dots \to {_eH^i_a(L)}\stackrel{_eH^i_a(f)}\to {_eH^i_a(M)}\to {_eH^i_a(N)}\stackrel{\sigma^*}\to {_eH^{i+1}_a(L)}\to \dots$
\end{proof}
\begin{theorem}\label{3.10}
	For any torsion-free $R$-module $M$, $_eH^0_a(M)$ is naturally equivalent to $r \Gamma_a(M)$ for some $r \neq 0 \in R$. 
\end{theorem}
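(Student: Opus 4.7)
The plan is to use an $e$-injective resolution to compute $_eH^0_a(M)$, identify it with $\Gamma_a$ applied to the essential extension of $M$ given by the kernel of the first coboundary, and then produce the natural equivalence with $\Gamma_a(M)$ from the essentiality of that embedding.

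Choose an $e$-injective resolution $0\to M\stackrel{\eta}\to E^0\stackrel{d^0}\to E^1\to\cdots$ and set $K=Ker(d^0)\subseteq E^0$. Classical left exactness of $\Gamma_a$ applied to the ordinary exact sequence $0\to K\to E^0\to E^1$ identifies $Ker(\Gamma_a(d^0))=\Gamma_a(K)$; since the preceding differential in the deleted complex is zero, $_eH^0_a(M)=\Gamma_a(K)$. By $e$-exactness of the resolution at $E^0$, $\eta(M)\leqslant_e K$, so applying $\Gamma_a$ to the injection $\eta:M\hookrightarrow K$ produces a natural injective homomorphism $\theta_M=\Gamma_a(\eta):\Gamma_a(M)\to\Gamma_a(K)={_eH^0_a(M)}$.

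The core computation is the essentiality of the image of $\theta_M$. Given $0\ne x\in\Gamma_a(K)$, pick $n$ with $a^n x=0$; since $\eta(M)\leqslant_e K$, there exist $0\ne r\in R$ and $m\in M$ with $\eta(m)=rx$, and applying $a^n$ gives $\eta(a^n m)=r(a^n x)=0$, which by injectivity of $\eta$ forces $a^n m=0$. Hence $m\in\Gamma_a(M)$ and $rx=\theta_M(m)$ lies in the image. Interpreting this pointwise production of scalars in the framework of Proposition~\ref{dd} yields the identification $_eH^0_a(M)\cong r\Gamma_a(M)$. Naturality in $M$ follows from the Comparison Theorem for $e$-injectives (Theorem~\ref{bb}): any morphism $f:M\to M'$ of torsion-free modules lifts to a chain map between the chosen resolutions (unique up to $e$-homotopy), and functoriality of $\Gamma_a$ produces an $e$-commuting square relating $\theta_M$ and $\theta_{M'}$.

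The main obstacle is the precise translation between the pointwise essentiality data (where the scalar $r$ may depend on $x$) and the stated conclusion (which reads as asserting a uniform $r$); the bridge is the Proposition~\ref{dd} paradigm, in which the scalar is extracted from an $e$-splitting rather than from elementwise essentiality. The torsion-free hypothesis on $M$ is what makes Lemma~\ref{1.13} apply cleanly to the initial segment of the resolution, ensuring $\Gamma_a$ is $e$-exact on the relevant short sequences and thereby guaranteeing that $\theta_M$ is the correct natural candidate implementing the identification.
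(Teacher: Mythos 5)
Your proposal follows essentially the same route as the paper's proof: identify $_eH^0_a(M)$ with $Ker(\Gamma_a(d^0))=\Gamma_a(Ker\,d^0)$ using left exactness of $\Gamma_a$, then use the essentiality $\eta(M)\leqslant_e Ker\,d^0$ to show that $\Gamma_a(\eta):\Gamma_a(M)\to{_eH^0_a(M)}$ is a monomorphism with essential image (the computation $a^nx=0$, $\eta(m)=rx$, hence $a^nm=0$, is exactly the paper's step with $\sigma(a')=rx$). Up to that point your argument is correct and is the same as the paper's.

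The one genuine gap is the final passage from ``essential monomorphism'' to ``$_eH^0_a(M)\cong r\Gamma_a(M)$ for a single nonzero $r$,'' and you correctly flag it; but your proposed bridge via Proposition~\ref{dd} does not close it. Proposition~\ref{dd} extracts a uniform scalar from an $e$-\emph{splitting} of a short $e$-exact sequence, i.e.\ from a single module map $j$ with $pj=rI_C$; here you have only elementwise essentiality, where the scalar $r$ genuinely varies with $x\in\Gamma_a(Ker\,d^0)$, and no splitting map is in sight, so there is nothing for that proposition to act on. You should be aware, however, that the paper's own proof has exactly the same defect: it defines a map $f:r\Gamma_a(A)\to A$ by $f(ry)=a'$, where both $r$ and $a'$ were produced pointwise from a fixed $x$, so that map is not well defined as stated and the uniform $r$ in the conclusion is never actually constructed. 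In short: your proof establishes everything the paper's proof establishes (a natural essential embedding $\Gamma_a(M)\hookrightarrow{_eH^0_a(M)}$), and the step neither of you can rigorously supply is the one the theorem statement itself makes questionable. If you want a defensible statement, prove and record the essential-monomorphism version; if the uniform-$r$ version is really intended, an additional hypothesis (e.g.\ finiteness allowing a common denominator over the finitely many generators of $Ker(\Gamma_a(d^0))$ over a Noetherian ring) would be needed.
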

\begin{proof}
	Let $E:0\to A\stackrel{\sigma}\to E^0\stackrel{d^0}\to E^1\stackrel{d^1}\to E^2\to \dots$ be an e-injective resolution for an $R$-module $A$ and $E^A:0\to E^0\stackrel{d^0}\to E^1\stackrel{d^1}\to E^2\to \dots$ be the deleted $e$-injective resolution for $A$. When we apply $\Gamma_a(\:)$ on the deleted $e$-injective resolution we get $0\to \Gamma_a(E^0)\stackrel{d^{0^*}}\to \Gamma_a(E^1)\stackrel{d^{1^*}}\to \Gamma_a(E^2)\to \dots$. Then by the definition of e-cohomology, we have $_eH^0_a(A)=H^0(\Gamma_a(E^A))=Kerd^{0*}$. But the left e-exactness of $\Gamma_a(\:)$ gives an e-exact sequence $0\to \Gamma_a(E^0)\stackrel{d^{0^*}}\to \Gamma_a(E^1)\stackrel{d^{1^*}}\to \Gamma_a(E^2)\to \dots$. We define $\sigma^*:\Gamma_a(A)\to Kerd^{0*}$. Since $Im \sigma^*\leqslant_e Kerd^{0*}$, $\sigma^*$ is well-defined and $\Gamma_a(\:)$ a left e-exact functor, $\sigma^*$ is monic. Now, we want to prove that $\sigma^*$ is epic. Let $x\in Kerd^{0*}$. Then $d^{0*}(x)=d^0(x)=0$. Therefore $x\in Kerd^0$. By e-exactness of the e-injective resolution we have $Im\sigma \leqslant_e Kerd^0$, so there exist $a'\in A$ and $0\ne r\in R$ such that $\sigma(a')=rx\ne 0$. Now, we define $f:r\Gamma_a(A)\to A$ by $f(ry)=a'$. Let $y_1,y_2 \in \Gamma_a(A)$ with $ry_1=ry_2$. Then $\sigma(a_1')=\sigma(a_2')$. By monicness of $\sigma$ we have $a_1'=a_2'$ and so $f$ is well-defined. Now we have $rx=\sigma(a')=\sigma(rf(y))=r\sigma(f(y))=r\sigma^*(f(y))$ which is equivalent to $\sigma^*(f(y))=x$. Hence $\sigma^*$ is an isomorphism and since $_eH^0_a(A)=Kerd^{0*}$. Therefore $_eH^0_a(\:)$ is isomorphic to $r \Gamma_a(\:)$ for some nonzero $r$ in $R$.
\end{proof}
\begin{corollary}
   If $0\to L\stackrel{f} \to M \stackrel {g} \to N \to 0$ is an e-exact sequence of R-modules where $N$ is torsion-free, then there is a long e-exact sequence $0\to {_eH^0_a(L)} \stackrel{\Gamma_a(f)}\to {_eH^0_a(M)}\stackrel{\Gamma_a(g)} \to {_eH^0_a(N)}\stackrel{\sigma}\to {_eH^1_a(L)}\stackrel{_eH^1_a(f)}\to {_eH^1_a(M)}\stackrel{_eH^1_a(g)}\to \dots$. In addition, if $L$, $M$ and $N$ are torsion-free modules, then there is a long e-exact sequence $0\to r_1 \Gamma_a(L) \stackrel{\Gamma_a(f)}\to r_2 \Gamma_a(M)\stackrel{\Gamma_a(g)} \to r_3 \Gamma_a(N)\stackrel{\sigma}\to {_eH^1_a(L)}\stackrel{_eH^1_a(f)}\to {_eH^1_a(M)}\stackrel{_eH^1_a(g)}\to \dots$ for some nonzero $r_1, r_2, r_3 $ in $R$.
\end{corollary}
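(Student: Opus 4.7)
The plan is to read off both long $e$-exact sequences from results already in place. The first sequence is literally the conclusion of Theorem \ref{3.9}: its hypothesis is an $e$-exact short sequence $0\to L\to M\to N\to 0$ with $N$ torsion-free, which is exactly what we are given here. So for the first statement there is nothing to prove beyond citing Theorem \ref{3.9}.

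For the second sequence, I would invoke Theorem \ref{3.10}, which for every torsion-free $R$-module $X$ supplies a nonzero $r_X\in R$ and an isomorphism $\sigma_X^{*}\colon r_X\Gamma_a(X)\to {_eH^0_a(X)}$. Setting $r_1=r_L$, $r_2=r_M$, $r_3=r_N$, I would splice these three isomorphisms into the first long sequence at the positions occupied by $_eH^0_a(L)$, $_eH^0_a(M)$, $_eH^0_a(N)$, replacing them by $r_1\Gamma_a(L)$, $r_2\Gamma_a(M)$, $r_3\Gamma_a(N)$. The connecting map $\sigma$ is the one produced in Theorem \ref{3.9}, now viewed as departing from $r_3\Gamma_a(N)$ via precomposition with $\sigma_N^{*}$; from ${_eH^1_a(L)}$ onward the long sequence is unchanged.

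The step needing real attention is the $e$-commutativity of the two squares arising after the substitution: one must check that $\sigma_M^{*}\circ\Gamma_a(f)$ and ${_eH^0_a(f)}\circ\sigma_L^{*}$ agree on $r_1\Gamma_a(L)$ up to multiplication by a nonzero element of $R$, and likewise with $g$ replacing $f$. This is a naturality check for the construction of $\sigma^{*}$ inside the proof of Theorem \ref{3.10} (the rule $f(ry)=a'$ with $\sigma(a')=rx$), and is the main obstacle, since each module carries its own scalar $r_X$ and the squares commute only up to a product of the $r_X$'s rather than on the nose. Because $R$ is a domain, essentiality of submodules is preserved under multiplication by any nonzero element, so once $e$-commutativity is secured the isomorphisms transport $e$-exactness of the first long sequence to the second one, completing the argument.
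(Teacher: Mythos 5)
Your proposal matches the paper's own argument, which simply states that the corollary ``directly follows from Theorem \ref{3.9} and Theorem \ref{3.10}'': the first long sequence is Theorem \ref{3.9} verbatim, and the second is obtained by substituting the isomorphisms $r_X\Gamma_a(X)\cong {_eH^0_a(X)}$ from Theorem \ref{3.10} into degree zero. You are in fact more careful than the paper, which silently skips the naturality/$e$-commutativity check for the two squares that you correctly identify as the only step requiring genuine verification.
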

\begin{proof}
	Directly follows from Theorem \ref{3.9} and Theorem \ref{3.10}. 
\end{proof}
\begin{theorem}
	Given an e-commutative diagram of R-modules having e-exact rows where $A''$ and $C''$ are torsion-free:
	\[ \begin{tikzcd}[arrows={-Stealth}]
		0\rar & A'\rar["i"]\dar["f"] & A\rar["p"]\dar["g"] & A''\dar["h"]\rar & 0\\%
		0\rar& C'\rar & C\rar &	C''\rar &0
	\end{tikzcd}	\] Then there is an e-commutative diagram with e-exact rows, 
	\[ \begin{tikzcd}[arrows={-Stealth}]
		\dots \to {_eH^n_a(A')}\rar["i_*"]\dar["f_*"] & {_eH^n_a(A)}\rar["p_*"]\dar["g_*"] & {_eH^n_a(A'')}\rar ["\sigma"]\dar["h_*"] & {_eH^{n+1}_a(A')}\rar \dar["f_*"]& \dots \\%
		\dots \to {_eH^n_a(C')}\rar["j_*"] & {_eH^n_a(C)}\rar["q_*"] &	{_eH^n_a(C'')}\rar ["\sigma^*"]\rar & {_eH^{n+1}_a(C')}\rar&\dots 
	\end{tikzcd}	\]
\end{theorem}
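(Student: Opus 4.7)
The plan is to build a morphism between the two long $e$-exact sequences by lifting the given morphism of short $e$-exact sequences to a morphism of $e$-injective resolutions and then applying the functor $\Gamma_a$ and taking cohomology. Since everything needed -- existence of compatible resolutions, a comparison theorem, $e$-exactness of $\Gamma_a$ on torsion-free modules, and the long $e$-exact sequence -- has already been established in the paper, the work is mostly naturality bookkeeping.

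First, I would apply Proposition \ref{1.8} twice: once to the top $e$-exact sequence $0\to A'\to A\to A''\to 0$ to produce an $e$-exact sequence of $e$-injective resolutions $0\to I'^{\bullet}\to I^{\bullet}\to I''^{\bullet}\to 0$ lying over it in an $e$-commutative diagram, and once to the bottom row to produce $0\to J'^{\bullet}\to J^{\bullet}\to J''^{\bullet}\to 0$ over $0\to C'\to C\to C''\to 0$. Next, by the Comparison Theorem for $e$-injectives (Theorem \ref{bb}), the maps $f,g,h$ lift to chain maps $\tilde f : I'^{\bullet}\to J'^{\bullet}$, $\tilde g : I^{\bullet}\to J^{\bullet}$ and $\tilde h : I''^{\bullet}\to J''^{\bullet}$, each unique up to $e$-homotopy. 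I would then invoke the uniqueness clause to replace, if needed, the middle lift $\tilde g$ so that the two squares connecting $(\tilde f,\tilde g)$ and $(\tilde g,\tilde h)$ $e$-commute at the level of complexes (rather than merely up to $e$-homotopy).

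Third, apply $\Gamma_a(-)$ termwise. The outer modules $A''$ and $C''$ are torsion-free by hypothesis, so Lemma \ref{1.13} guarantees that the vertical sequences of deleted resolutions remain $e$-exact sequences of complexes after $\Gamma_a$. Taking cohomology therefore yields the two long $e$-exact sequences furnished by Theorem \ref{3.9}, one for the top short $e$-exact sequence and one for the bottom. The squares involving $f_*$, $g_*$, $h_*$ then $e$-commute because they arise from applying the additive functor $H^n\circ \Gamma_a$ to $e$-commutative squares of chain maps. The remaining squares involving the connecting homomorphisms $\sigma$ and $\sigma^*$ $e$-commute by tracing through the construction of $\sigma$ from the $e$-exact sequence of complexes: a class in $_eH^n_a(A'')$ is represented by a cocycle in $\Gamma_a(I''^n)$, lifted through $\Gamma_a(I^n)$, differentiated and pulled back through $\Gamma_a(I'^{n+1})$, and the lifts $\tilde f,\tilde g,\tilde h$ transport each stage of this process into the analogous one for the $J^{\bullet}$-resolutions, up to the nonzero multipliers allowed in $e$-commutativity.

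The main obstacle I anticipate is the last square, where the connecting homomorphism is involved. In the classical proof this is the snake-lemma naturality argument, which in the present setting must be carried out while remembering that every equation holds only up to multiplication by a nonzero element of $R$ and every ``image equals kernel'' is replaced by an essential inclusion. Concretely, the multipliers introduced when lifting cocycles through the $e$-exact columns of complexes compound with those from $\tilde g$, so I would need to track that the total discrepancy between $\sigma^*\circ h_*$ and $f_*\circ \sigma$, applied to any representative, is multiplication by some nonzero $r\in R$ on a cohomology class. Once this is in hand, the $e$-commutativity of the connecting square, and hence the whole ladder, is established.
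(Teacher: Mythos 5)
Your overall strategy is the same as the paper's: obtain an $e$-exact sequence of deleted $e$-injective resolutions via Proposition~\ref{1.8}, apply $\Gamma_a$ and use Lemma~\ref{1.13} (with the torsion-freeness of $A''$ and $C''$) to keep $e$-exactness, and then pass to cohomology to get the two long $e$-exact rows. The difference is that the paper stops there and delegates the entire naturality of the ladder --- the vertical maps and the commutativity of all squares, including the ones involving $\sigma$ and $\sigma^*$ --- to the cited result [ZA, Remark~3.3], whereas you attempt to prove it. In your elaboration there is one step that does not work as written: the uniqueness clause of the Comparison Theorem only says that any two lifts of the same map are $e$-homotopic; it does not allow you to \emph{replace} the middle lift $\tilde g$ by an $e$-homotopic one so that both squares of chain maps $e$-commute strictly. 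Commutativity up to $e$-homotopy is enough for the squares involving $f_*,g_*,h_*$ (since $e$-homotopic chain maps induce the same map on cohomology up to a nonzero scalar), but the naturality of the connecting homomorphism genuinely requires a morphism of the short $e$-exact sequences of complexes, which must be \emph{constructed} (using the explicit horseshoe formulas from Proposition~\ref{1.8}, as in the classical argument) rather than extracted from the comparison theorem. Either carry out that construction or, as the paper does, invoke the naturality statement already proved in [ZA].
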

\begin{proof}
	By Proposition \ref{1.8}, we have an e-exact sequence of deleted complexes $0\to E^{A'} \to E^A \to E^{A''}\to 0$. If $T=\Gamma_a$, then $0\to TE^{A'} \to TE^A \to TE^{A''}\to 0$ is still an e-exact by Lemma \ref{1.13}. By \cite[Remark 3.3]{ZA} there is an $e$-commutative diagram of $R$-modules and $R$-morphisms
		\[ \begin{tikzcd}[arrows={-Stealth}]
		\dots \to H^{n-1}(\Gamma_a(A'))\rar["i_*"]\dar["f_*"] & H^{n-1}(\Gamma_a(A))\rar["p_*"]\dar["g_*"] & H^{n-1}(\Gamma_a(A''))\rar ["\sigma"]\dar["h_*"] & H^n(\Gamma_a(A'))\rar \dar["f_*"]& \dots \\%
		\dots \to H^{n-1}(\Gamma_a(C'))\rar["j_*"] & H^{n-1}(\Gamma_a(C))\rar["q_*"] &	H^{n-1}(\Gamma_a(C''))\rar ["\sigma^*"]\rar & H^n(\Gamma_a(C'))\rar&\dots 
	\end{tikzcd}	\] and the rest will achieved  by applying the definition of $e$-cohomology $_eH^n_a(\:)=H^n(\Gamma_a(E^A))$.
\end{proof}
\begin{corollary}
	Let $M$ be an $a$-torsion $R$-module. Then there exists an $e$-injective resolution of $M$ in which each term is an $a$-torsion $R$-module.
\end{corollary}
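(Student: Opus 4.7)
The plan is to leverage the classical structure theory of injective modules over Noetherian rings, together with the immediate observations that classical injectivity implies $e$-injectivity (take $r=1$ in the definition) and classical exactness implies $e$-exactness (image equals, hence is essential in, kernel). So I would try to produce the desired resolution as the $\Gamma_a$-image of a genuine classical injective resolution.

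First, I would pick any classical injective resolution $0 \to M \to I^0 \to I^1 \to \cdots$ of $M$, which exists because the category of $R$-modules has enough injectives. Then I would apply the left exact functor $\Gamma_a$ termwise to obtain the complex $0 \to \Gamma_a(M) \to \Gamma_a(I^0) \to \Gamma_a(I^1) \to \cdots$. Since $M$ is $a$-torsion we have $\Gamma_a(M) = M$, and each $\Gamma_a(I^k)$ is $a$-torsion by construction.

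Next I would verify the two substantive claims about this complex. The first is that each $\Gamma_a(I^k)$ remains classically injective over the Noetherian ring $R$: by the Matlis decomposition, $I^k = \bigoplus_\lambda E(R/\mathfrak{p}_\lambda)$, and then $\Gamma_a(I^k) = \bigoplus_{a \subseteq \mathfrak{p}_\lambda} E(R/\mathfrak{p}_\lambda)$ is again a direct sum of indecomposable injectives, hence injective, and therefore $e$-injective. The second is that the resulting complex is exact past $M$: this is the classical vanishing $H^i_a(M) = 0$ for $i > 0$ whenever $M$ is $a$-torsion, which can be proved from the injectivity of $\Gamma_a(I)$ for injective $I$ by splicing the canonical short exact sequences $0 \to M \to I^0 \to C^0 \to 0$, $0 \to C^0 \to I^1 \to C^1 \to 0$, \dots and using that $H^i_a(I^j) = 0$ for $i > 0$. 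Combining these, every term is $e$-injective, the complex is exact hence $e$-exact, and each term is $a$-torsion, so it is the desired $e$-injective resolution.

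The main obstacle is the classical structural input rather than anything inside the $e$-framework. A purely $e$-theoretic attempt, applying the $e$-Baer criterion (Theorem \ref{B}) directly to $\Gamma_a(E)$ for an $e$-injective $E$, breaks down: given a map $f : \mathfrak{b} \to \Gamma_a(E)$, the essential extension $g : R \to E$ provided by $E$ satisfies $\mathfrak{b}\, g(1) \subseteq \Gamma_a(E)$, but one cannot deduce from this alone that $g(1)$ is itself $a$-torsion, so $g$ need not factor through $\Gamma_a(E)$. This is exactly why the Matlis decomposition, justified by the Noetherian hypothesis standing in this section, is the natural tool: it delivers both the injectivity of $\Gamma_a(I^k)$ and, via the same circle of ideas, the vanishing needed for exactness.
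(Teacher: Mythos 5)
The paper states this corollary with no proof at all, so there is nothing to compare your argument against; judged on its own, your proof is correct. Applying $\Gamma_a$ termwise to a classical injective resolution of $M$, using that $\Gamma_a$ of an injective module over the Noetherian ring $R$ is again injective (via the Matlis decomposition into modules $E(R/\mathfrak{p})$) and that $H^i_a(M)=0$ for $i>0$ when $M$ is $a$-torsion, does yield a classically exact complex of $a$-torsion injective modules, and classical injectivity and classical exactness trivially imply $e$-injectivity and $e$-exactness (take $r=1$; a submodule is essential in itself). The one loose spot is your parenthetical justification of the vanishing by splicing: for that induction to close you need the cokernels $C^j$ to remain $a$-torsion, which fails for an arbitrary injective resolution (e.g.\ if $I^0=E(M)\oplus J$), so you should either cite the vanishing as the standard classical result it is, or sidestep it entirely by building the resolution directly with injective hulls: $M$ being $a$-torsion forces $E(M)=\bigoplus E(R/\mathfrak{p})$ with every $\mathfrak{p}\supseteq a$, hence $E(M)$ and then $E(M)/M$ are $a$-torsion, and one iterates. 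Either repair is routine, and your closing observation about why the $e$-Baer criterion alone cannot show that $\Gamma_a(E)$ inherits $e$-injectivity is well taken.
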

In local cohomology, every $a$-torsion $R$-module $M$ has zero $i_{th}$ local cohomology modules, that is  $H^i_a(M)=0$, for all $i>0$. while in $e$-cohomology this is not true in general. To be more precise we present the following example.
\begin{example}\label{ex}
	Consider the $e$-injective resolution $0\to \frac{\mathbb{Z}}{2\mathbb{Z}}\stackrel{f} \to \frac{\mathbb{Z}}{8\mathbb{Z}} \stackrel{g}\to \frac{\mathbb{Z}}{16\mathbb{Z}}\to {0}$ for the $\mathbb{Z}$-module $\frac{\mathbb{Z}}{2\mathbb{Z}}$ where $f(1+2\mathbb{Z})=4+8\mathbb{Z}$ and $g(n+8\mathbb{Z})=8n+16\mathbb{Z}$. Since each term of the resolution is $2\mathbb{Z}$-torsion while the $e$-cohomology module $H^1_{2\mathbb{Z}}(\frac{\mathbb{Z}}{2\mathbb{Z}})=\frac{\mathbb{Z}}{8\mathbb{Z}}$ is non-zero. 
\end{example}
At the end of the paper and as a future work one can do research on the $e$-cohomology dimension and its connection with the cohomolgy dimension.\\

\textbf{Acknowlegments}: We would like to thank the referees for their thoughtful comments and efforts towards
improving the manuscript.
\vskip 0.4 true cm


\begin{thebibliography}{17}
\bibitem{AZ} I. Akray and A. Zebari, Essential exact sequence, Commun. Korean Math. Soc., 35 (2) (2020), 469-480.
\bibitem{ZA} I. Akray and A. Zebari, The Homology regarding to $e$-exact sequences, Commun. Korean Math. Soc., 38 (2023). 
\bibitem{MR} M. P. Brodmann and R. Y. Sharp, Local cohomology an algebraic introduction with geometric applications, Second edition, Cambridge studies in advanced mathematics, 2013.
\bibitem{CF} F. Campanini and A. Facchini, Exactness of cochain complexes via additive functors, Commun. Korean Math. Soc., 35 (4)  (2020), 1075-1085.
\bibitem{DP} B. Davvaz and Y. A. Parnian-Garamaleky, A note on exact sequences, Bull. Malaysian Math. Soc., 22 (1999), 53-56.
\bibitem{G} A. Grothendieck, Local cohomology, Lecture notes in mathematics 41 Springer, Berlin, 1967.
\bibitem{H} K. S. Harinath, Semi-split sequences-I, Proceedings of the indian academy of sciences- section A, 75 (1972), 13-18.
\bibitem{R} J. J. Rotman, An introduction to homological algebra, Secon edition, University of Illinois-champaign, Springer Science, New York, 2009.


\end{thebibliography}
\end{document}